\definecolor{linkylink}{RGB}{200,0,0}
\newcommand{\Z}{{\mathbb{Z}}}
\newcommand{\E}{\mathcal E}
\newcommand{\Zcal}{\mathcal Z}
\newcommand{\R}{{\mathbb{R}}}
\newcommand{\C}{\mathcal C}
\newcommand{\D}{\mathcal D}
\newcommand{\pushout}{\ar@{}[ul(0.35)]|-{\ulcorner}}
\newcommand{\pullback}{\ar@{}[dr(0.35)]|-{\lrcorner}}
\newcommand{\add}{\mathop{\text{add}}}
\DeclareMathOperator{\rep}{rep}
\newcommand{\AR}{A_{\R}}
\newcommand{\ARS}{A_{\R{,}S}}
\newcommand{\kk}{\ensuremath{\Bbbk}}
\newcommand{\repAR}{\rep_{\kk}(\AR)}
\newcommand{\cfrak}{\mathfrak{c}}
\newcommand{\A}{\mathcal A}
\DeclareMathOperator{\adic}{\mathsf{adic}}
\DeclareMathOperator{\Prufer}{\mathsf{Pr\ddot{u}fer}}
\newcommand{\inftyclosed}{\overline{\infty}}
\newcommand{\DbAR}{{\mathcal{D}^b(\AR)}}
\newcommand{\CAR}{{\mathcal{C}(\AR)}}
\newcommand{\CARS}{{\mathcal{C}(\ARS)}}
\newcommand{\Cinftyclosed}{\C(A_{\inftyclosed})}
\newcommand{\Ninftyclosed}{\NN_{\inftyclosed}}
\newcommand{\Finftyinftyclosed}{F_\infty^{\inftyclosed}}
\newcommand{\etainftyinftyclosed}{{\eta_\infty^{\inftyclosed}}}
\newcommand{\KsplitCARS}{{K_0^{\text{split}}(\CARS)}}
\newcommand{\theory}[2]{\mathscr{T}_{#1}(#2)}
\newcommand{\struc}[2]{\mathscr{S}_{#1}(#2)}
\newcommand{\PP}{\mathbf P}
\newcommand{\QQ}{\mathbf Q}
\newcommand{\NN}{\mathbf N}
\newcommand{\EE}{\mathbf E}
\newcommand{\TT}{\mathbf T}
\newcommand{\iinftyclosed}{\overline{i,\infty}}
\newcommand{\jinftyclosed}{\overline{j,\infty}}
\DeclareMathOperator{\Ind}{Ind}
\DeclareMathOperator{\Hom}{Hom}
\newtheorem{lemma}{Lemma}[section]
\newtheorem{thm}{Theorem}
\theoremstyle{definition}
\newtheorem{definition}[lemma]{Definition}
\newtheorem{remark}[lemma]{Remark}
\newtheorem{example}[lemma]{Example}
\title[Cluster Theories and Cluster Structures of Type A]{Cluster Theories and Cluster Structures\\of Type A}
\subjclass[2020]{05E10, 13F60, 16G20}
\author{Job Daisie Rock}
\address{Ghent University, Ghent, Belgium}
\email{\texttt{jobdrock@gmail.com}}
\date{\today}
\begin{document}
\maketitle

\begin{abstract}
In the present paper we examine the relationship between several type $A$ cluster theories and structures.
We define a 2D geometric model of a cluster theory, which generalizes cluster algebras from surfaces, and encode several existing type $A$ cluster theories into a 2D geometric model.
We review two other cluster theories of type $A$.
Then we introduce an abstraction of cluster structures.
We prove two results: the first relates several existing type $A$ cluster theories and the second relates some of these cluster structures using the new abstraction.
\end{abstract}


\section{Introduction}
\subsection*{History}
	Cluster algebras were introduced by Fomin and Zelevinsky in 2002 \cite{FZ02}.
	Since then, there have been efforts to study cluster structures of cluster algebras via combinatorial methods on surfaces.
	Caldero, Chapaton, and Schiffler proved that type $A_n$ cluster structures are encoded in a cluster category of triangulations of the $(n+3)$-gon \cite{CCS06}.
	Their cluster category is equivalent to the type introduced by Buan, Marsh, Reineke, et al in \cite{BMRRT06} and later generalized by Buan, Iyama, Reiten, and Scott in \cite{BIRS09}.
	Fomin, Shapiro, and Thurston studied cluster structures using triangulated surfaces in general \cite{FST08}.
	For the state of the art at the time of writing, we refer the reader to \cite[Chapter 4]{A21}.
	
	Cluster structures from triangulations of polygons were generalized by Holm and J{\o}rgensen to triangulations of the infinity-gon, whose vertices are indexed by $\Z$ \cite{HJ12}.
	Igusa and Todorov generalized that cluster structure to the $i$-infinity-gon (in the terminology of this paper), whose vertices are indexed by $i\geq 1$ many copies of $\Z$ \cite{IT15a}.
	Baur and Graz introduced a cluster structure using triangulations of the completed infinity-gon, whose vertices are $\Z\cup\{\pm\infty\}$ \cite{BG18}.
	Paquette and Y{\i}ld{\i}r{\i}m generalized Igusa and Todorov's model by completing the $i$-infinity gon with an additional $i$ vertices \cite{PY21}.
	Baur and Graz also did this for the 1-infinity-gon \cite{BG18}.
	Igusa and Todorov also generalized triangulations of the polygon to ideal triangulations of the hyperbolic plane \cite{IT15b}.
	One may think of this as triangulations of the ``$\R$-gon.''
	
	Igusa, Todorov, and the author generalized cluster structures to cluster theories \cite{IRT22b}.
	Any existing cluster structure is a cluster theory; they also introduced $\EE$-cluster theories for continuous quivers of type $A$.
	The author then created a geometric models of $\EE$-cluster theories in \cite{R20}.
	
	Chapoton, Fomin, and Zelevinsky proved that the associahedron in $n$ dimensions encodes the cluster structure of type $A_n$ \cite{CFZ02}.
	Kulkarni, Matherne, Mousavand, and the author defined a $\TT$-cluster theory to obtain a continuous generalization of an associahedron, which allows embeddings from all the finite type $A_n$ associahedra \cite{KMMR21}.
	
\subsection*{Contributions}
	In Section~\ref{sec:cluster theories (big)} we review the definition of a cluster theory (Definition~\ref{def:cluster theory}), which generalizes the combinatorics of a cluster structure, and the definition of an embedding of cluster theories (Definition~\ref{def:cluster embedding}).
	We then define a generalization of triangulations of surfaces (Definition~\ref{def:2D geometric model of a cluster theory}) to capture the cluster combinatorics and encode several existing type $A$ cluster theories in this way (Section~\ref{sec:geometric models for embeddings}).
	Then we recall two more existing type $A$ cluster theories (Section~\ref{sec:other theories}).
	We finish Section~\ref{sec:cluster theories (big)} by introducing an abstract notion of a cluster structure and an embedding of such cluster structures (Definitions~\ref{def:cluster structure} and \ref{def:restrict to cluster structure}).

	In Sections \ref{sec:embeddings of cluster theories} and \ref{sec:strutures in theories} we prove Theorems~\ref{thm:intro:theories} and \ref{thm:intro:structures}, respectively.
	\begin{thm}\label{thm:intro:theories}
	For any $2\leq m <n$ and $2\leq i < j$, the diagram on page \pageref{fig:intro diagram} is a commutative diagram of embeddings of cluster theories.
	\begin{sidewaysfigure*}
	\begin{align*}
		{~} \\ {~} \\ {~} \\ {~} \\ {~} \\ {~} \\ {~} \\
		{~} \\ {~} \\ {~} \\ {~} \\ {~} \\ {~} \\ {~} \\
		{~} \\ {~} \\ {~} \\ {~} \\ {~} \\ {~} \\ {~} \\
		{~} \\
	\end{align*}
	\begin{displaymath}
	\xymatrix@C=8ex{
		\theory{\NN_m}{\C(A_m)} \ar[r] \ar[d] &
		\theory{\NN_\infty}{\C(A_\infty)}\cong \theory{\NN_{1,\infty}}{\C(A_{1,\infty})} \ar[r] \ar[d] &
		\theory{\NN_{i,\infty}}{\C(A_{i,\infty})} \ar[r] \ar[d] &
		\theory{\NN_{j,\infty}}{\C(A_{j,\infty})} \ar[r] \ar[d] &
		\theory{\NN_\pi}{\C_\pi} \ar[d]
		\\
		\theory{\NN_n}{\C(A_n)} \ar[r] \ar[ur] &
		\theory{\NN_{\overline{1,\infty}}}{\C(A_{\overline{1,\infty}})} \ar[r] \ar[d] &
		\theory{\NN_{\iinftyclosed}}{\C(A_{\iinftyclosed})} \ar[r] &
		\theory{\NN_{\jinftyclosed}}{\C(A_{\jinftyclosed})} \ar[r] \ar[ur] &
		\theory{\EE}{\CAR}
		\\
		&
		\theory{\Ninftyclosed}{\Cinftyclosed} \ar[r] &
		\theory{\NN_{\overline{2,\infty}}}{\C(A_{\overline{2,\infty}})} \ar[u]
		\\
		\theory{\NN_m}{\C(A_m)} \ar[r] \ar@/^4ex/[rr] &
		\theory{\NN_n}{\C(A_n)} \ar[r] &
		\theory{\TT}{\C_\Zcal}
	}
	\end{displaymath}
	\caption{A commutative diagram of embeddings of cluster theories.
		Theories $\theory{\NN_m}{\C(A_m)}$ and $\theory{\NN_n}{\C(A_n)}$ are from \cite{CCS06}.
		Theory $\theory{\NN_\infty}{\C(A_\infty)}$ is from \cite{HJ12}.
		Theories $\theory{\NN_{1,\infty}}{\C(A_{1,\infty})}$, $\theory{\NN_{i,\infty}}{\C(A_{i,\infty})}$, and $\theory{\NN_{j,\infty}}{\C(A_{j,\infty})}$ are from \cite{IT15a}.
		Theory $\theory{\Ninftyclosed}{\Cinftyclosed}$ is from \cite{BG18}.
		Theories $\theory{\NN_{\overline{1,\infty}}}{\C(A_{\overline{1,\infty}})}$, $\theory{\NN_{\overline{2,\infty}}}{\C(A_{\overline{2,\infty}})}$, $\theory{\NN_{\iinftyclosed}}{\C(A_{\iinftyclosed})}$, and $\theory{\NN_{\jinftyclosed}}{\C(A_{\jinftyclosed})}$ are from \cite{PY21}.
		Theory $\theory{\NN_\pi}{\C_\pi}$ is from \cite{IT15b}.
		Theory $\theory{\EE}{\CAR}$ is from \cite{IRT22b}.
		Theory $\theory{\TT}{\C_\Zcal}$ is from \cite{KMMR21}.}\label{fig:intro diagram}
	\end{sidewaysfigure*}
	\end{thm}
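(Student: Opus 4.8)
The plan is to realize every cluster theory appearing in the diagram as the cluster theory of a 2D geometric model in the sense of Definition~\ref{def:2D geometric model of a cluster theory}, and then to realize every arrow as the embedding of cluster theories induced by a morphism of the underlying geometric data. First I would record, for each node, an explicit choice of 2D geometric model: the polygons $\C(A_m),\C(A_n)$ as marked disks with finitely many boundary points; the (completed) $i$-infinity-gons as disks whose marked points are indexed by $i$ copies of $\Z$, together with their limit points for the barred versions $\C(A_{\iinftyclosed})$; the hyperbolic/$\R$-gon $\C_\pi$ and the continuous type $A$ model $\CAR$ as their respective continuous marked surfaces; and $\C_\Zcal$ as the model underlying the $\TT$-cluster theory. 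The constructions of Section~\ref{sec:geometric models for embeddings} supply these identifications, so this first step is largely bookkeeping.

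Next I would prove a single functoriality lemma: an inclusion of the marked-point data of one 2D geometric model into another, compatible with the ambient order and sending noncrossing pairs to noncrossing pairs, induces an embedding of cluster theories in the sense of Definition~\ref{def:cluster embedding}. Concretely, such an inclusion determines a functor on the associated categories that carries arcs (indecomposables) to arcs and clusters to clusters, and injectivity on objects together with preservation of the compatibility relation is exactly what Definition~\ref{def:cluster embedding} demands. Each arrow of the diagram would then be exhibited as arising from such an inclusion of marked points: adjoining boundary points ($\C(A_m)\hookrightarrow \C(A_n)$), adjoining a copy of $\Z$ ($\C(A_{i,\infty})\hookrightarrow \C(A_{j,\infty})$), adjoining the limit points at $\pm\infty$ (the barred versions), or embedding the discrete marked points into the continuous boundary (the maps into $\C_\pi$, $\CAR$, and $\C_\Zcal$).

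With every arrow identified as induced by an inclusion of marked points, commutativity becomes essentially combinatorial. I would reduce commutativity of the whole diagram to commutativity of each elementary square and triangle by the usual pasting argument, and each cell reduces in turn to commutativity, at the level of sets of marked points, of the corresponding inclusions, which holds on the nose. The distinguished isomorphism $\theory{\NN_\infty}{\C(A_\infty)}\cong\theory{\NN_{1,\infty}}{\C(A_{1,\infty})}$ is the one arrow not of pure inclusion type; I would treat it as the identification of the infinity-gon with the single-copy case and verify that it intertwines the two adjacent inclusions, so that the neighbouring cells still commute.

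The main obstacle I anticipate is the passage from the discrete, completed models to the genuinely continuous targets $\C_\pi$, $\CAR$, and $\C_\Zcal$. For the discrete-into-discrete arrows the functoriality lemma applies verbatim, but for the continuous targets one must check that the chosen embedding of the (possibly completed) discrete set of marked points into the continuous boundary genuinely respects the compatibility and mutation structure --- in particular that limit arcs through the points at infinity are sent to the correct continuous arcs, and that no crossings or clusters are created or destroyed. Verifying that these embeddings are faithful on morphisms and land in a cluster-closed subcategory, rather than merely injective on arcs, is where the real work lies; it is here that the specific descriptions of $\C_\pi$, $\CAR$, and $\C_\Zcal$ from the respective sources must be invoked in detail.
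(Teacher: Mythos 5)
Your central ``functoriality lemma'' is false, and the place where it fails is exactly where the substance of the theorem lies. An order-preserving injection of marked points does induce an injection on arcs that preserves and reflects compatibility --- this is Lemma~\ref{lem:general technique 2} of the paper --- but such an injection does \emph{not} carry clusters to clusters: the image of a maximal compatible set is compatible, yet almost never maximal in the target. This already fails for the most elementary discrete-into-discrete arrow $\theory{\NN_m}{\C(A_m)}\to\theory{\NN_{m+1}}{\C(A_{m+1})}$, precisely the case you claim the lemma handles ``verbatim'': a cluster in $\C(A_m)$ has $m-3$ arcs while a cluster in $\C(A_{m+1})$ has $m-2$, so no inclusion-induced image of a cluster can be a cluster. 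Consequently the theorem cannot be reduced to a single inclusion-of-marked-points lemma. For every arrow one must make an explicit, arrow-specific choice of arcs to adjoin to the image $\Phi(T)$, and then prove both that the enlarged set is maximal and that mutations go to mutations: the paper adjoins a single boundary-parallel arc for $F_m^{m+1}$, an infinite fan of arcs $\{(-i,i),(-i,i+1)\}$ for $F_n^\infty$ and similarly for $F_{i,\infty}^{j,\infty}$, the Pr\"ufer and adic completions (which depend on $T$, so the functor is not even of the form $T\mapsto\Phi(T)\cup S$ for a fixed $S$) for the completed gons, and the dyadic accumulation families $A$, $B$, $C$ in Section~\ref{sec:embeddings:continuous} for the map into $\theory{\NN_\pi}{\C_\pi}$, where maximality of $\Phi(T)\cup A\cup B\cup C$ requires a genuine case analysis. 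Mutation-preservation is then checked via the quadrilateral characterizations of mutation in the cited sources, feeding into Lemma~\ref{lem:general technique 1}, which --- note --- takes the functor $F$ on clusters as \emph{given data} rather than producing it.

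This gap also undermines your commutativity argument. Because each arrow carries its own completion arcs, commutativity of a cell does not follow ``on the nose'' from commutativity of the underlying marked-point inclusions: the two paths around a cell adjoin different-looking families of extra arcs, and one must verify that the resulting clusters coincide (or, as the paper does, \emph{define} the long arrows as composites, e.g.\ $F_m^n$, $F_{j,\infty}^\pi$, $F_{\jinftyclosed}^\R$, and then check the few remaining genuinely distinct paths, such as the triangle through $\theory{\NN_n}{\C(A_n)}$ and the squares through the completed $i$-infinity-gons). Your final paragraph does sense that something must be checked, but you locate the difficulty only in the continuous targets $\C_\pi$, $\CAR$, $\C_\Zcal$; in fact the maximality problem is ubiquitous, beginning with polygons, and the continuous case differs only in demanding the most elaborate completion sets.
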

	
	In our notation below, we replace $\mathscr T$ with $\mathscr S$ (see the diagram on page \pageref{fig:intro diagram}).
	
	\begin{thm}\label{thm:intro:structures}
		There is a commutative diagram of embeddings of abstract cluster structures:
	\begin{displaymath}
		\xymatrix{
			\struc{\NN_m}{\C(A_m)} \ar[r] \ar[d] &
			\struc{\NN_\infty}{\C(A_\infty)} \ar[r] \ar[dr] \ar[d] &
			\struc{\NN_{1,\infty}}{\C(A_{1,\infty})} \ar[d] \ar[r] &
			\struc{\NN_{j,\infty}}{\C(A_{j,\infty})}
			\\
			\struc{\NN_n}{\C(A_n)} \ar[r] \ar[ur] &
			\struc{\NN_\pi}{\C_\pi} &
			\struc{\NN_{i,\infty}}{\C(A_{i,\infty})}. \ar[ur]
			&
		}
	\end{displaymath}
	\end{thm}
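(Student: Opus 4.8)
The plan is to obtain the diagram of Theorem~\ref{thm:intro:structures} as the image of a subdiagram of the diagram of Theorem~\ref{thm:intro:theories} under the operation that restricts a cluster theory to its abstract cluster structure (Definition~\ref{def:cluster structure}) and restricts an embedding of cluster theories to an embedding of abstract cluster structures (Definition~\ref{def:restrict to cluster structure}). Concretely, I would argue in three stages: first, that each object $\struc{\NN_X}{\C(A_X)}$ appearing in the diagram is the abstract cluster structure underlying the cluster theory $\theory{\NN_X}{\C(A_X)}$ of Theorem~\ref{thm:intro:theories}; second, that each arrow of the structure diagram is the restriction of an embedding (or a composite of embeddings) of cluster theories from that same theorem; and third, that commutativity is then inherited, because restriction is functorial and Theorem~\ref{thm:intro:theories} is already known to commute.

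For the first stage I would check, theory by theory, that the collection of clusters of each $\theory{\NN_X}{\C(A_X)}$ in question satisfies the axioms of an abstract cluster structure (Definition~\ref{def:cluster structure}). The point to emphasize here is the \emph{selection} of objects: only the non-completed discrete theories $\theory{\NN_m}{\C(A_m)}$, $\theory{\NN_n}{\C(A_n)}$, $\theory{\NN_\infty}{\C(A_\infty)}$, $\theory{\NN_{1,\infty}}{\C(A_{1,\infty})}$, $\theory{\NN_{i,\infty}}{\C(A_{i,\infty})}$, $\theory{\NN_{j,\infty}}{\C(A_{j,\infty})}$, together with the continuous theory $\theory{\NN_\pi}{\C_\pi}$, reappear as abstract cluster structures, whereas the completed-boundary theories $\theory{\NN_{\overline{1,\infty}}}{\C(A_{\overline{1,\infty}})}$, $\theory{\NN_{\iinftyclosed}}{\C(A_{\iinftyclosed})}$, the Baur--Gr\"az theory $\theory{\Ninftyclosed}{\Cinftyclosed}$, and the theories $\theory{\EE}{\CAR}$ and $\theory{\TT}{\C_\Zcal}$ are omitted. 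I would explain this by identifying, for each omitted theory, the cluster-structure axiom that its cluster collection fails — the expected culprit being a well-defined mutation/exchange relation, since limit arcs in the completed models cannot be flipped to produce another cluster.

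For the second stage I would verify, for each arrow, the hypotheses of Definition~\ref{def:restrict to cluster structure}: namely that the underlying cluster-theory embedding of Theorem~\ref{thm:intro:theories} carries clusters to clusters and intertwines mutation, so that it restricts to an honest embedding of abstract cluster structures. Several arrows of the structure diagram are not single arrows of the theory diagram but composites along its top row — for instance $\struc{\NN_\infty}{\C(A_\infty)}\to\struc{\NN_{i,\infty}}{\C(A_{i,\infty})}$ (the composite of the isomorphism $\theory{\NN_\infty}{\C(A_\infty)}\cong\theory{\NN_{1,\infty}}{\C(A_{1,\infty})}$ with an embedding) and $\struc{\NN_\infty}{\C(A_\infty)}\to\struc{\NN_\pi}{\C_\pi}$ and $\struc{\NN_n}{\C(A_n)}\to\struc{\NN_\pi}{\C_\pi}$. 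For these I would take the restriction of the corresponding theory-level composite as the definition of the structure embedding, which is legitimate precisely because each factor already restricts.

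Finally, commutativity of the structure diagram follows from commutativity of the theory diagram: any two directed paths with the same source and target are the restrictions of two composites of cluster-theory embeddings that are equal by Theorem~\ref{thm:intro:theories}, and since the restriction of a composite is the composite of the restrictions, the two resulting structure embeddings coincide. I expect the main obstacle to lie in the second stage — confirming, embedding by embedding, that the morphisms of Theorem~\ref{thm:intro:theories} really do satisfy the compatibility condition of Definition~\ref{def:restrict to cluster structure} (clusters to clusters, mutation to mutation) — and, intertwined with the first stage, pinning down exactly which cluster-structure axiom forces the completed and continuous-boundary theories out of the diagram.
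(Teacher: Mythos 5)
Your stage-two premise --- that every arrow of the structure diagram is the restriction of an embedding (or composite of embeddings) from Theorem~\ref{thm:intro:theories} --- is exactly what fails, and it is why the paper does not argue by restriction except on the left-hand part of the diagram. The Theorem~\ref{thm:intro:theories} embeddings into the infinite and continuous models adjoin ``filler'' arcs that land outside the target cluster structures. Concretely: $F_{i,\infty}^{j,\infty}$ (Section~\ref{sec:embeddings:i-infinity-gons}) adjoins a leapfrog set $((-n,\ell),(n,\ell)),\ ((-n,\ell),(n+1,\ell))$ whose arcs converge to the chord joining the two accumulation points flanking copy $\ell$; such a cluster violates the limit condition of \cite[Definition~2.4.6]{IT15b} that defines $\struc{\NN_{j,\infty}}{\C(A_{j,\infty})}$, so $F_{i,\infty}^{j,\infty}$ does not carry the structure into the structure. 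Likewise, every route in Theorem~\ref{thm:intro:theories} from $\NN_\infty$ to $\NN_\pi$ factors through $F_{\jinftyclosed}^{\pi}$ (Section~\ref{sec:embeddings:continuous}), whose set $B$ places infinitely many arcs at the single endpoint $\frac{\pi}{2}$; the image is therefore never a discrete $\NN_\pi$-cluster and never lies in $\struc{\NN_\pi}{\C_\pi}$. For precisely these arrows the paper constructs genuinely new embeddings: $H_{i,\infty}^{j,\infty}$ (Section~\ref{sec:diagram of structures}), built on a re-coordinatized ``cut'' geometric model $\C'(A_{i,\infty})$ so that the adjoined set $D$ is locally finite and its arcs degenerate to a single point rather than to a forbidden chord, and $H_\infty^\pi$ (Section~\ref{sec:new cont embedding}), built from the dyadic set $E$ so that images stay discrete. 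Only the arrows among $\NN_m$, $\NN_n$, $\NN_\infty$, $\NN_{1,\infty}$ are honest restrictions (Section~\ref{sec:restrictions}), which is the part of your plan that survives.

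There is a second gap in your stage one: an abstract cluster structure is a \emph{chosen} subcategory, not canonically attached to the theory (the paper flags non-uniqueness right after Definition~\ref{def:cluster structure}), and the choices in Theorem~\ref{thm:intro:structures} are deliberately asymmetric. The paper takes $\struc{\NN_{1,\infty}}{\C(A_{1,\infty})}$ to be the Holm--J{\o}rgensen structure of \cite{HJ12} (fountains allowed) but takes $\struc{\NN_\infty}{\C(A_\infty)}$ to be the strictly smaller structure of fountain-free clusters; this asymmetry is forced, since a fountain would require a limiting arc in $\C_\pi$, so $H_\infty^\pi$ cannot be defined on the Holm--J{\o}rgensen structure (end of Section~\ref{sec:new cont embedding}). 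Under your uniform ``underlying structure'' reading, the arrow $\struc{\NN_\infty}{\C(A_\infty)}\to\struc{\NN_\pi}{\C_\pi}$ does not exist at all, and your stage three then has nothing to inherit along any path through the new $H$'s: commutativity there holds by construction of the composites, not by functoriality of restriction. Your diagnosis of why the completed theories drop out (non-mutable limit arcs, as in Example~\ref{xmp:Ainftyclosed not-structure}) is correct, but the core of the paper's proof is the construction of new structure-level embeddings, which your proposal is missing.
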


\subsection*{Conventions}
	We have a fixed field $\kk$ throughout.
	A ``Krull--Schmidt category'' is a ``skeletally small Krull--Schmidt additive category.''
	By $\Ind(\C)$ we denote the set of isomorphism classes of indecomposable objects in a Krull--Schmidt category $\C$. 
	
	Let $a<b\in\R\cup\{\pm \infty\}$.
	By $|a,b|$ we denote an interval subset of $\R$ whose endpoints are $a$ and $b$ but whose inclusion of $a$ or $b$ is not known or not relevant.
	
	We say ``arc'' in reference lines between non-consecutive vertices of a polygon where others say ``diagonal.''
	However, we still use ``diagonals'' for quadrilaterals when we talk about mutation.

\subsection*{Acknowledgements}
	Some of this work was completed while the author was a graduate student at Brandeis University and some of this work was completed when the author was at the Hausdorff Research Institute for Mathematics.
	The author thanks both institutions for their hospitality.
	The author was also partly supported by UGent BOF grant BOF/STA/201909/038 and FWO grants G023721N and G0F5921N.

\section{Cluster Theories and Cluster Structures}\label{sec:cluster theories (big)}
We first review the definition of cluster theories in Section~\ref{sec:cluster theories}. 
In Section~\ref{sec:2D geometric models}, we define 2D geometric models of cluster theories and in Section~\ref{sec:geometric models for embeddings} we use those to define cluster theories from several sources \cite{BG18, CCS06, HJ12, IT15a, IT15b, PY21}. 
We review the cluster theories from \cite{IRT22b,KMMR21} in Section \ref{sec:other theories}.
In Section~\ref{sec:abstract cluster structures} we define abstract cluster structures and embeddings of abstract cluster structures.

\subsection{Cluster Theories}\label{sec:cluster theories}
In this section we review cluster theories and embeddings of cluster theories, from \cite{IRT22b}, and we review isomorphisms and weak equivalences of cluster theories, from \cite{R20}.
Throughout this section, let $\C$ and $\D$ be Krull--Schmidt categories.
\begin{definition}\label{def:cluster theory}
	Let $\PP$ be a pairwise compatibility condition $\Ind(\C)$.
	For every maximally $\PP$-compatible set $T\subset\Ind(\C)$, and every $X\in T$, suppose there exists 0 or 1 indecomposables $Y\in\Ind(\C)$ such that $\{X,Y\}$ is not $\PP$-compatible but $(T\setminus\{X\})\cup\{Y\}$ is $\PP$-compatible.
	Then
	\begin{itemize}
	\item We call the maximally $\PP$-compatible sets \ul{$\PP$-clusters}.
	\item We call a function of the form $\mu:T\to (T\setminus\{X\})\cup\{Y\}$ such that $\mu Z=Z$ when $Z\neq X$ and $\mu X=Y$ a \ul{$\PP$-mutation}.
	\item The subcategory $\theory{\PP}{\C}$ of $\mathcal S\text{et}$ whose objects are $\PP$-clusters and whose morphisms are generated by $\PP$-mutations (and identity functions) is called \ul{the $\PP$-cluster theory of $\C$}.
	\item The functor $I_{\PP,\C}:\theory{\PP}{\C}\to \mathcal S\text{et}$ is the inclusion of the subcategory.
	\end{itemize}
\end{definition}

Every cluster structure in the sense of \cite{BMRRT06,BIRS09} yields a cluster theory.

\begin{remark}\label{rmk:cluster theory consequences}
	We note three things immediately from Definition~\ref{def:cluster theory}.
	\begin{itemize}
	\item The set $(T\setminus \{X\}) \cup \{Y\}$ must be maximally $\PP$-compatible.
	\item The category $\theory{\PP}{\C}$ is small.
	\item The pairwise compatibility condition $\PP$ determines the cluster theory.
	\end{itemize}
\end{remark}

In light of Remark \ref{rmk:cluster theory consequences}, we say $\PP$ induces the $\PP$-cluster theory of $\C$.

\begin{definition}\label{def:cluster embedding}
	Let $\theory{\PP}{\C}$ and $\theory{\QQ}{\D}$ be cluster theories.
	Let $F:\theory{\PP}{\C} \to \theory{\QQ}{\D}$ be a functor such that $F$ is an injection on objects and an injection from $\PP$-mutations to $\QQ$-mutations.
	Let $\eta: I_{\PP,\C} \to  I_{\QQ, \D}\circ F$ be a natural transformation such that the morphisms $\eta_T: I_{\PP,\C}(T) \to I_{\QQ, \D}\circ F(T)$ are all injections.
	Then we call $(F,\eta):\theory{\PP}{\C} \to \theory{\QQ}{\D}$ an \ul{embedding of cluster theories}.
\end{definition}

\begin{definition}\label{def:equivalence of cluster theories}
Let $\theory{\PP}{\C}$ and $\theory{\QQ}{\D}$ be cluster theories.
An \ul{isomorphism of cluster theories} is an embedding of cluster theories $(F,\eta):\theory{\PP}{\C} \to \theory{\QQ}{\D}$ such that $F$ is an isomorphism of categories and each $\eta_T$ is an isomorphism.
\end{definition}

\subsection{2D Geometric Models}\label{sec:2D geometric models}
In this section we define 2D geometric models of cluster theories (Definition~ \ref{def:2D geometric model of a cluster theory}).
The ``2D'' indicates that this is a generalization of triangulations of surfaces, a good place to obtain cluster algebras and cluster categories (we refer the reader to \cite[Chapter 4]{A21} for more information).
We focus on the combinatorial aspect of arcs on a surface and their crossings.

\begin{definition}\label{def:geometric model}
Let $\A$ be a set.
Let $\cfrak:\A\times\A\to\{0,1\}$ be a function such that
\begin{itemize}
	\item $\cfrak(\alpha,\beta) = \cfrak(\beta,\alpha)$ for all $\alpha,\beta\in\A$ and
	\item $\cfrak(\alpha,\alpha)=1$ for all $\alpha\in\A$.
\end{itemize}
We call $\A$ the set of \ul{arcs} and $\cfrak$ the \ul{crossing function} and write the pair as $(\A,\cfrak)$.
\end{definition}

\noindent \textbf{!!} Notice we consider each arc to cross itself. This simplifies Definition~\ref{def:additive categorification}.

\begin{example}[Polygon]\label{xmp:polygon geometric model}
	Let $\E$ be the set of natural numbers $\{1,\ldots,n\}$ and $\A_n$ the set of pairs $(i,j)\in\E\times\E$ such that $2\leq j-i \leq n-1$.
	Define $\cfrak_n:\A_n\times\A_n\to \{0,1\}$
	\begin{displaymath}
		\cfrak_n\left(\, (i,j)\, ,\, (i',j')\, \right) = \begin{cases}
			1 & i < i' < j < j'\text{ or } i' < i < j' < j \\
			0 & \text{otherwise}.
		\end{cases}
	\end{displaymath}
	We see $(\A_n,\cfrak_n)$ captures the combinatorics of arcs and their crossings on a polygon.
\end{example}

\begin{definition}\label{def:additive categorification}
	Let $(\A, \cfrak)$ be as in Definition~\ref{def:geometric model}.
	We construct a new additive category $\C$, called the \ul{additive categorification of $(\A,\cfrak)$}.
	The objects of $\C$ are the 0 object and finite direct sums of elements in $\A$.
	Let $\alpha,\beta,\gamma\in\A=\Ind(\C)$.	
	Define the Hom spaces and composition $\alpha\stackrel{f}{\to}\beta\stackrel{g}{\to}\gamma$, for $f$ and $g$ nonzero.
	\begin{align*}
		\Hom_{\C}(\alpha,\beta) &= \begin{cases}
 			\kk & \cfrak(\alpha,\beta)=1 \\
 			0 & \cfrak(\alpha,\beta)=0
 		\end{cases} &
 		& & & & g\circ f &= \begin{cases}
 			g\cdot f\in\kk & \alpha=\beta \text{ or } \beta=\gamma \\
 			0 & \text{otherwise}.
 		\end{cases}
	\end{align*}
	Extending bilinearly, we obtain a Krull--Schmidt category.
	
	Define a pairwise compatibility condition $\PP$, which we call the \ul{pairwise encoding of $\cfrak$}.
	We say two indecomposables $\alpha$ and $\beta$ are $\PP$-compatible if $\alpha=\beta$ or $\cfrak(\alpha,\beta)=0$.
\end{definition}

\begin{definition}\label{def:2D geometric model of a cluster theory}
	Let $(\A, \cfrak)$ be as in Definition~\ref{def:geometric model}, let $\C$ be the additive categorification of $(\A,\cfrak)$, and let $\PP$ the pairwise encoding of $\cfrak$.
	If $\PP$ induces the $\PP$-cluster theory of $\C$, we call $(\A,\cfrak)$ a \ul{2D geometric model of $\theory{\PP}{\C}$}.
	If $\theory{\QQ}{\D}$ is a cluster theory isomorphic to $\theory{\PP}{\C}$, we also say $(\A,\cfrak)$ is a 2D geometric model of $\theory{\QQ}{\D}$.
\end{definition}

\subsection{2D Geometric Models of Known Cluster Theories}\label{sec:geometric models for embeddings}
We begin with general crossing function that we use several times and a quadrilateral.
Then we define 2D geometric models of a cluster theories of polygons, the infinity-gon, and the completed infinity-gon from \cite{CCS06}, \cite{HJ12}, and \cite{BG18}, respectively.
We follow with 2D geometric models for a generalization of the infinity-gon from \cite{IT15a} and a generalization of the completed infinity-gon from \cite{PY21}.
The last 2D geometric model we define is ideal triangulations of the hyperbolic plane from \cite{IT15b}.

\begin{definition}\label{def:type A crossing}
	Let $\E$ be a totally ordered set and $\A\subset \E\times\E$ such that if $(i,j)\in\A$ then there exists $\ell\in\E$ such that $i<\ell<j$.
	The \ul{type $A$ crossing function on $\A$}, denoted $\cfrak:\A\times\A\to \{0,1\}$, is given by
	\begin{displaymath}
		\cfrak((i,j),(i',j')) = \begin{cases}
 			1 & ((i,j)= (i',j')) \text{ or } (i<i'<j<j')\text{ or } (i'<i<j'<j) \\
 			0 & \text{otherwise}.
 		\end{cases}
	\end{displaymath}
\end{definition}

\begin{definition}
	Let $\E$, $\A$, and $\cfrak$ be as in Definition~\ref{def:type A crossing}.
	A \ul{quadrilateral} in $(\A,\cfrak)$ is a set of four arcs using four points $i<j<k<\ell$ in $\E$: $(i,j)$, $(i,\ell)$, $(j,k)$, $(k,\ell)$.
\end{definition}

\begin{definition}[Polygons from \cite{CCS06}]\label{def:polygon}
	Let $(\A_n,\cfrak)$ be the same as in Example~\ref{xmp:polygon geometric model}.
	Let $\C(A_n)$ be the additive categorification of $(\A_n,\cfrak_n)$ and $\NN_n$ the pairwise encoding of $\cfrak_n$.
\end{definition}

It is shown in \cite{CCS06} that $\NN_n$ induces the $\NN_n$-cluster theory of $\C(A_n)$.

\begin{definition}[Infinity-gon from \cite{HJ12}]\label{def:infinity-gon}
	Let $\A_\infty = \{(i,j)\in\Z\times\Z \mid 2\leq j-i\}$ and let $\cfrak_\infty:\A_\infty\times\A_\infty\to\{0,1\}$ be the type $A$ crossing function on $\A_\infty$.
	Let $\C(A_\infty)$ be the additive categorification of $(\A_\infty,\cfrak_\infty)$ and $\NN_\infty$ the pairwise encoding of $\cfrak_\infty$.
\end{definition}

It is shown in \cite{HJ12} that $\NN_\infty$ induces the $\NN_\infty$-cluster theory of $\C(A_\infty)$.

\begin{definition}[Completed Infinity-gon from \cite{BG18}]\label{def:comp infinity-gon}\label{def:Cinftyclosed}\label{def:Ninftyclosed compatibility}
	Let $\E = \Z\cup\{-\infty,+\infty\}$. For all $i\in\Z$, we say $-\infty<i<+\infty$.
	Let 
	\begin{displaymath}
		\A = \left\{(i,j)\in\E\times\E \mid \exists \ell\in\Z \text{ s.t. } i<\ell<j \right\} \setminus \{(-\infty,+\infty)\}.
	\end{displaymath}
	Let $\cfrak_{\inftyclosed}:\A_{\inftyclosed}\times\A_{\inftyclosed}\to \{0,1\}$ be the type $A$ crossing function on $\A_{\inftyclosed}$.
	Let $\Cinftyclosed$ be the additive categorification of $(\A_{\inftyclosed},\cfrak_{\inftyclosed})$ and $\Ninftyclosed$ the pairwise encoding of $\cfrak_{\inftyclosed}$.
\end{definition}

It is shown in \cite{BG18} that $\Ninftyclosed$ incudes the $\Ninftyclosed$-cluster theory of $\Cinftyclosed$.

We obtain Definitions \ref{def:i-infinity-gon} and \ref{def:completed i-infinity-gon} by cutting the boundary of the circle and then taking a total order from one end of the cut to the other.
In \cite{IT15b}, the authors take triangulations of the disk by marking its boundary (the circle).
They take countably-many marked points with finitely-many accumulation points.
The accumulation points must be approached from both sides.
For this cluster theory (Definition~\ref{def:i-infinity-gon}) we cut at one of the accumulation points.

\begin{definition}[$i$-infinity-gon from \cite{IT15b}]
\label{def:i-infinity-gon}
	Fix a positive integer $i\geq 1$ and let $\E_{i,\infty} = \Z \times\{1,\ldots,i\}$ with total order $(m,\ell)\leq (n,p)$ if $\ell<p$ or if $\ell=p$ and $m\leq n$. 
	Let $\A_{i,\infty} = \{(j,\ell)\in\E_{i,\infty}\times\E_{i,\infty}\mid \exists k\in\E_{i,\infty}\text{ s.t. }j<k<\ell\}$.
	Let $\cfrak_{i,\infty}:\A_{i,\infty}\times \A_{i,\infty}\to \{0,1\}$ be the type $A$ crossing function on $\A_{i,\infty}$,\ \ $\C(A_{i,\infty})$ be the additive categorification of $(\A_{i,\infty},\cfrak_{i,\infty})$, and $\NN_{i,\infty}$ the pairwise encoding~of~$\cfrak_{i,\infty}$.
\end{definition}

It is shown in \cite{IT15a} that $\NN_{i,\infty}$ induces the $\NN_{i,\infty}$-cluster theory of $\C(A_{i,\infty})$.
Note that the 1-infinity-gon we have taken from \cite{IT15b}is the infinity-gon we have taken from \cite{HJ12}.

In \cite{PY21}, the authors take completed triangulations from \cite{IT15b} by also considering the accumulation points to be marked.
We cut at an accumulation point, which we consider to be the maximal element in our set of endpoints.

\begin{definition}[Completed $i$-inifnity-gon from \cite{PY21}]\label{def:completed i-infinity-gon}
	Fix a positive integer $i\geq 1$ and let $	\E_{\iinftyclosed} = (\Z\cup\{+\infty\}) \times\{1,\ldots,i\}$.
	We give $\Z\cup\{+\infty\}$ the usual total ordering and give $\E_{\iinftyclosed}$ the same order as in Definition~\ref{def:i-infinity-gon}.
	Let $\A_{\iinftyclosed} = \{(j,\ell)\in\E_{\iinftyclosed}\times \E_{\iinftyclosed} \mid \exists k\in\E_{\iinftyclosed} \text{ s.t. } j<k<\ell\}$.
	Let $\cfrak_{\iinftyclosed}: \A_{\iinftyclosed}\times\A_{\iinftyclosed}\to\{0,1\}$ be the type $A$ crossing function on $\A_{\iinftyclosed}$.
	Let $\C(A_{\iinftyclosed})$ be the additive categorification of $(\A_{\iinftyclosed},\cfrak_{\iinftyclosed})$ and $\NN_{\iinftyclosed}$ the pairwise encoding of $\cfrak_{\iinftyclosed}$.
\end{definition}

\noindent \textbf{!!} The completed 1-infinity-gon is \emph{not} the same as the completed infinity-gon.

{~}

It is shown in \cite{PY21} that $\NN_{\iinftyclosed}$ induces the $\NN_{\iinftyclosed}$-cluster theory of $\C(A_{\iinftyclosed})$.

Definition~\ref{def:hyperbolic} comes from the disk model of the hyperbolic plane.
We modify it to simplify calculations later.
\begin{definition}[Hyperbolic plane from \cite{IT15a}]\label{def:hyperbolic}
	Let $\E_\pi=(-\frac{\pi}{2},\frac{\pi}{2}]$ with total order inherited from $\R$.
	Let $\A_\pi=\{(x,y)\in\E_\pi\mid x<y\}$ and $\cfrak_\pi:\A_\pi\times\A_\pi\to\{0,1\}$ be the type $A$ crossing function on $\A_\pi$.
	Let $\C_\pi$ be the additive categorification of $(\A_\pi,\cfrak_\pi)$ and $\NN_\pi$ the pairwise encoding of $\cfrak_\pi$.
\end{definition}

It is shown in \cite{IT15a} that $\NN_\pi$ induces the $\NN_\pi$-cluster theory of $\C_\pi$. Notice our notation of $C_\pi$ instead of ``$\C(A_\pi)$.'' This is the notation used by Igusa and Todorov.

\subsection{Review of Other Cluster Theories}\label{sec:other theories}
We review cluster theories from \cite{IRT22b, KMMR21}.
The reader is referred to these works for additional details.

\begin{definition}[$\TT$-clusters from \cite{KMMR21}]\label{def:T-clusters}
	Let $\Zcal=\{(x,x) \mid -\frac{\pi}{2}<x<\frac{\pi}{2}\}$.
	Let $\Zcal[1]=\{(x+\pi,-x)\in\R\times(-\frac{\pi}{2},\frac{\pi}{2})\mid (x,x)\in\Zcal\}$.
	Let
	\begin{displaymath}
		\left. \C_{\Zcal} = \left\{ 
			(x,y)\in\R\times\left(-\frac{\pi}{2},\frac{\pi}{2}\right)\, \, \right|\, \,
			\exists (x_0,y)\in\Zcal,\,  \exists (x_1,y)\in\Zcal[1] \text{ s.t. } x_0\leq x \leq x_1
		\right\}.
	\end{displaymath}
	We say $(x,y)$ and $(z,w)$ in $\Ind(\C_\Zcal)$ are \emph{not} $\TT$-compatible if and only if there exists a rectangle inside $\C_{\Zcal}\cup\{(\frac{\pi}{2},\frac{\pi}{2})\}\cup\{(x,-\frac{\pi}{2})\mid -\frac{\pi}{2}\leq x \leq \frac{3\pi}{2}\}$ with sides sloped at $\pm 1$ whose left and right corners are $\{(x,y),(z,w)\}$.
\end{definition}

It is shown in \cite{KMMR21} that $\TT$ incudes the $\TT$-cluster theory of $\C_{\Zcal}$. Notice again we are using different notation, this time from \cite{KMMR21}.
The set $\Ind(\C_\Zcal)$ is a triangle in $\R^2$ without $(\frac{\pi}{2},\frac{\pi}{2})$ and without its bottom side $\{(x,-\frac{\pi}{2})\mid -\frac{\pi}{2}\leq x \leq \frac{3\pi}{2}\}$.

For Definition~\ref{def:E-clusters}, we need a bit of background.
We refer the reader to \cite{IRT22a, IRT22b} for more details.
Consider $\R$ as a category where $\Hom_\R(x,y) = \emptyset$ if $y>x$ and $\Hom_\R(x,y) = \{*\}$ if $x\geq y$.
A \ul{pointwise finite-dimensional representation} $V$ of $\R$ is a functor from $\R$ to finite-dimensional vector spaces.
This yields an abelian category $\repAR$ of finitely-generated representations and its bounded derived category $\DbAR$.
From here we take an orbit and obtain the triangulated category $\CAR$ where $\Ind(\CAR)=\Ind(\repAR)$.

There are 2 indecomposable projective representations $P_x$ and $P_{x)}$, for each $x\in\R$.
There is also a projective $P_{+\infty)}$.
There is a monomorphism $P_y\to P_x$ if and only if $x>y$ and a nonzero morphism $P_{x)}\to P_x$ (but not the other way) for each $x\in\R$.
For any indecomposable projective $P$ there is a nonzero morphism $P\to P_{+\infty)}$ (but not the other way).
For each pair $a<b$ in $\R$ there are 4 indecomposable representations that have the following projective resolutions:
\begin{align*}
	P_{a)}\to P_{b)} &\to M_{[a,b)} & P_a \to P_b &\to M_{(a,b]}\\
	P_{a)}\to P_b &\to M_{[a,b]} & P_a\to P_{b)} &\to M_{(a,b)}.
\end{align*}
$I_{(-\infty}=P_{+\infty)}$ and other indecomposable injectives have projective resolutions:
\begin{align*}
	P_{x)} \to P_{+\infty)} &\to I_x & P_x \to P_{+\infty)}\to I_{(x}.
\end{align*}
Each indecomposable $M_{|a,b|}$ (see `$|$'s in Conventions) has its \ul{g-vector} $[P]-[P']$ in $K^{\text{split}}_0(\CAR)$ where $P'\to P\to M_{|a,b|}$ is the projective resolution of $M_{|a,b|}$.
For $[V]=\sum_i m_i[V_i]$ and $[W]=\sum_j n_j[W_j]$ in $\KsplitCARS$, define an Euler form:
\begin{displaymath}
	\left\langle [V]\, ,\, [W] \right\rangle :=
	\sum_i \sum_j (\dim_\kk\Hom_{\CAR}(V_i,W_j)).
\end{displaymath}

\begin{definition}[$\EE$-clusters from \cite{IRT22b}]\label{def:E-clusters}
	Let $\CAR$ be the category defined above.
	Let $V,W\in\Ind(\C)$ with $g$-vectors $[P_V]-[P'_V]$ and $[P_W]-[P'_W]$.
	We say $\{V,W\}$ is $\EE$-compatible if and only if
	\begin{displaymath}
		\langle [P_V]-[P'_V]\, ,\, [P_W]-[P'_W] \rangle \geq 0
		\qquad \text{and} \qquad
		\langle [P_W]-[P'_W]\, ,\, [P_V]-[P'_V] \rangle \geq 0.
	\end{displaymath}
\end{definition}

It is shown in \cite{IRT22b} that $\EE$ induces the $\EE$-cluster theory of $\CAR$.

\subsection{Abstract Cluster Structures}\label{sec:abstract cluster structures}
In this section we define abstract cluster structures (Definition~\ref{def:cluster structure}) and show examples from \cite{BMRRT06,CCS06,BIRS09,HJ12,IT15a,IT15b} that we use in Section~\ref{sec:strutures in theories}.
We then we define an embedding of cluster structures (Definition~\ref{def:restrict to cluster structure}).

\begin{definition}\label{def:cluster structure}
Let $\C$ be a Krull--Schmidt cateogory and let $\PP$ be a pairwise compatibility condition that induces $\theory{\PP}{\C}$.
Let $\mathscr S_{\PP}(\C)\subset \mathscr T_{\PP}(\C)$ be a subcategory.
We call $\mathscr S_{\PP}(\C)$ a \ul{$\PP$-cluster structure of $\C$} if it satisfies the following conditions.
\begin{itemize}
\item For each $\PP$-cluster $T$ in $\mathscr S_{\PP}(\C)$, every $X\in T$ is $\PP$-mutable and its $\PP$-mutation $(T\setminus \{X\})\cup\{Y\}$ is also in $\mathscr S_{\PP}(\C)$.
\item For each $\PP$-cluster $T$ in $\mathscr S_{\PP}(\C)$ the category $\C / \add T$ is abelian.
\end{itemize}
We define $I_{\PP,\C}^{\mathscr{S}}:\struc{\PP}{\C}\to \mathcal Set$ to be the composition of the inclusion $\struc{\PP}{\C}\to \theory{\PP}{\C}$ and $I_{\PP,\C}$.
\end{definition}
\noindent\textbf{!!} Notice $\struc{\PP}{\C}$ is not necessarily unique.

\begin{example}\label{xmp:An structure}
	The cluster theory $\theory{\NN_n}{\C(A_n)}$ (Definition~\ref{def:polygon}) from \cite{CCS06} is also an $\NN_n$-cluster structure of $\C(A_n)$.
\end{example}

\begin{example}\label{xmp:cluster category}
	In general, any cluster category as in \cite{BMRRT06,BIRS09} induces a cluster theory that is also a cluster structure as in Definition~\ref{def:cluster structure}.
\end{example}

\begin{example}\label{xmp:Ainfty structure}
In \cite{HJ12} the authors describe the $\NN_{1,\infty}$-cluster structure of $\C(A_{1,\infty})$ as those triangulations of the infinity-gon that are functorially finite.
In their setting, those triangulations $T$ which are not functorially finite do not yield abelian quotient categories.
That is, the cluster structure in \cite{HJ12} is a cluster structure as in Definition~\ref{def:cluster structure}.
Denote this cluster structure by $\struc{\NN_{1,\infty}}{C(A_{1,\infty})}$.
\end{example}

\begin{definition}\label{def:restrict to cluster structure}
Let $\C$ and $\D$ be Krull--Schmidt categories with pairwise compatibility conditions $\PP$ and $\QQ$, respectively, that induce $\theory{\PP}{\C}$ and $\theory{\QQ}{\D}$.
Let $\struc{\PP}{\C}$ and $\struc{\QQ}{\D}$ be a $\PP$-cluster structure of $\C$ and a $\QQ$-cluster structure of $\D$, respectively.
A pair $F:\struc{\PP}{\C}\to \struc{\QQ}{\D}$ and $\eta:I_{\PP,\C}^{\mathscr{S}}\to (I_{\QQ,\D}^{\mathscr{S}}\circ F)$ is an \ul{embedding of cluster structures} if $F$ is faithful and injective on objects and if $\eta_T$ is an injection for each $\PP$-cluster $T$.
\end{definition}

See Example~\ref{xmp:Ainftyclosed not-structure} in Section~\ref{sec:strutures in theories}.

\section{Proof of Theorem~\ref{thm:intro:theories}}
\label{sec:embeddings of cluster theories}\label{sec:chain of embeddings}
In this section we prove Theorem~\ref{thm:intro:theories} by proving there is a diagram of embeddings of type $A$ cluster theories as in Figure~\ref{fig:big diagram}.
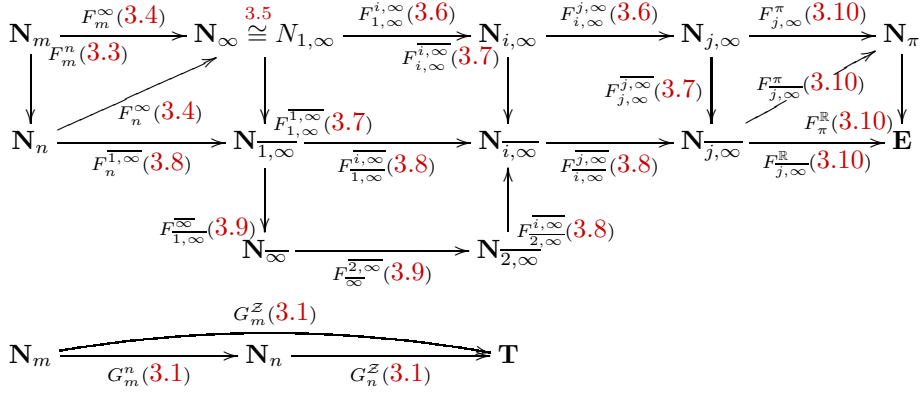
\begin{figure}
	\begin{displaymath}
	\xymatrix@C=11ex{
		\NN_m \ar[r]^-{F_m^\infty(\ref{sec:embeddings:finite-infinite})} \ar[d]^<{\ F_m^n(\ref{sec:embeddings:finite})}  \ar@{}[rr]^-{\begin{matrix}\text{\scriptsize\ref{sec:embeddings:isomorphism}\ \ }\end{matrix}} &
		\NN_\infty\cong N_{1,\infty} \ar[r]^-{F_{1,\infty}^{i,\infty}(\ref{sec:embeddings:i-infinity-gons})} \ar[d]^>{F_{1,\infty}^{\overline{1,\infty}}(\ref{sec:embeddings:completing i-infinity-gons})} &
		\NN_{i,\infty} \ar[r]^-{F_{i,\infty}^{j,\infty}(\ref{sec:embeddings:i-infinity-gons})} \ar[d]_<{F_{i,\infty}^{\iinftyclosed}(\ref{sec:embeddings:completing i-infinity-gons})} &
		\NN_{j,\infty} \ar[r]^-{F_{j,\infty}^\pi (\ref{sec:embeddings:continuous})} \ar[d]_-{F_{j,\infty}^{\jinftyclosed}(\ref{sec:embeddings:completing i-infinity-gons})} &
		\NN_\pi \ar[d]_>{F_\pi^\R (\ref{sec:embeddings:continuous})}
		\\
		\NN_n \ar[r]_-{F_n^{\overline{1,\infty}}(\ref{sec:emeddings:complete i-infinity-gon})} \ar[ur]_-{F_n^\infty(\ref{sec:embeddings:finite-infinite})} &
		\NN_{\overline{1,\infty}} \ar[r]_{F_{\overline{1,\infty}}^{\iinftyclosed}(\ref{sec:emeddings:complete i-infinity-gon})} \ar[d]_>{F_{\overline{1,\infty}}^{\inftyclosed}(\ref{sec:embeddings:2-infinity-gon})} &
		\NN_{\iinftyclosed} \ar[r]_-{F_{\iinftyclosed}^{\jinftyclosed}(\ref{sec:emeddings:complete i-infinity-gon})} &
		\NN_{\jinftyclosed} \ar[r]_-{F_{\jinftyclosed}^\R (\ref{sec:embeddings:continuous})} \ar[ur]|-{F_{\jinftyclosed}^\pi (\ref{sec:embeddings:continuous})} &
		\EE
		\\
		&
		\NN_{\inftyclosed} \ar[r]_-{F_{\inftyclosed}^{\overline{2,\infty}}(\ref{sec:embeddings:2-infinity-gon})} &
		\NN_{\overline{2,\infty}} \ar[u]_<{F_{\overline{2,\infty}}^{\iinftyclosed}(\ref{sec:emeddings:complete i-infinity-gon})}
		\\
		\NN_m \ar[r]_-{G_m^n (\ref{sec:embeddings:G})} \ar@/^2ex/[rr]^-{G_m^\Zcal (\ref{sec:embeddings:G})} &
		\NN_n \ar[r]_-{G_n^\Zcal (\ref{sec:embeddings:G})} &
		\TT
	}
	\end{displaymath}
	\caption{Embeddings of cluster theories of type $A$ for any integers $m,n,i,j$ satisfying $2\leq m<n$ and $2\leq i < j$.
		For each cluster theory $\theory{\PP}{\C}$, we have only written its pairwise compatibility condition $\PP$ in order to save space.
		We have also written each embedding $(F,\eta)$ as $F$ with the section in which it is defined.}\label{fig:big diagram}
\end{figure}

\subsection{The $G_*^*$ Embeddings}\label{sec:embeddings:G}
For $2\leq m<n$, the embeddings of cluster theories $G_m^n:\theory{\NN_m}{\C(A_m)}\to \theory{\NN_n}{\C(A_n)}$ and $G_m^{\Zcal}:\theory{\NN_m}{\C(A_m)}\to \theory{\TT}{\C_{\Zcal}}$ are defined in \cite[Section~6.4.2]{KMMR21} with natural transformations $\xi_n^m$ and $\xi_m^\Zcal$, respectively.
Furthermore, in \cite[Theorem~6.18]{KMMR21} it is shown that $G_m^{\Zcal} = G_n^{\Zcal}\circ G_m^n$.

\subsection{The general technique}\label{sec:embeddings:general}
We will use the same technique several times, which we summarize in the following two lemmas.

\begin{lemma}\label{lem:general technique 1}
	Let $\theory{\PP}{\C}$ and $\theory{\QQ}{\D}$ be cluster theories, $\Phi:\Ind(\C)\to\Ind(\D)$ be an injection, and $F:\theory{\PP}{\C}\to\theory{\QQ}{\D}$ be a functor. 
	Suppose $\{X,Y\}$ is $\PP$-compatible if and only if $\{\Phi(X),\Phi(Y)\}$ is $\QQ$-compatible and, for each $\PP$-cluster $T$, if $X\in T$ then $\Phi(X)\in F(T)$.
	Then there is an embedding of cluster theories $(F,\eta):\theory{\PP}{\C}\to\theory{\QQ}{\D}$ such that $\eta_T(X) = \Phi(X)$, for all $X$ in each $\PP$-cluster $T$.
\end{lemma}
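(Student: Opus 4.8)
The plan is to take the obvious candidate for the natural transformation, namely $\eta_T\colon I_{\PP,\C}(T)\to I_{\QQ,\D}(F(T))$ defined by $\eta_T(X)=\Phi(X)$, and to verify the three requirements of Definition~\ref{def:cluster embedding}: that each $\eta_T$ is a well-defined injection, that $F$ is injective on objects and carries $\PP$-mutations injectively to $\QQ$-mutations, and that $\eta$ is natural. The hypothesis that $\Phi(X)\in F(T)$ whenever $X\in T$ says exactly that $\eta_T$ lands in $I_{\QQ,\D}(F(T))=F(T)$, so $\eta_T$ is well defined, and it is injective because $\Phi$ is. This disposes of the conditions on the $\eta_T$ immediately.

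The heart of the argument is injectivity of $F$ on objects, which I would prove by a maximality argument. Suppose $F(T)=F(T')$ for $\PP$-clusters $T,T'$ and fix $X\in T$. Then $\Phi(X)\in F(T)=F(T')$, so $\Phi(X)$ is $\QQ$-compatible with every $\Phi(Z)$ for $Z\in T'$, because all of these elements lie in the single $\QQ$-cluster $F(T')$, which is maximally $\QQ$-compatible hence pairwise $\QQ$-compatible. The compatibility equivalence then gives that $X$ is $\PP$-compatible with every $Z\in T'$; since $T'$ is maximally $\PP$-compatible, this forces $X\in T'$. Hence $T\subseteq T'$, and by symmetry $T=T'$. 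The same equivalence, used in the other direction, shows that $\Phi$ carries each $\PP$-cluster into a pairwise $\QQ$-compatible set, which is what makes the candidate $\eta$ consistent with the cluster structure on the target.

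For the mutation conditions I would show that $F$ sends the $\PP$-mutation $\mu\colon T\to (T\setminus\{X\})\cup\{Y\}$ to the $\QQ$-mutation exchanging $\Phi(X)$ and $\Phi(Y)$ inside $F(T)$. Granting this, naturality of $\eta$ is a direct check on generators: on $Z\neq X$ both composites around the square return $\Phi(Z)$, while on $X$ both return $\Phi(Y)$. Injectivity of $F$ on mutations is then automatic, since a $\PP$-mutation is determined by its source and target and $F$ is already injective on objects.

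The step I expect to be the main obstacle is precisely the claim that $F(\mu)$ is a single $\QQ$-mutation exchanging $\Phi(X)$ and $\Phi(Y)$, rather than a longer composite of $\QQ$-mutations. Because the hypotheses only provide $\Phi(T)\subseteq F(T)$, the target cluster $F(T)$ may contain ``filler'' arcs beyond the image of $\Phi$, and one must show these filler arcs are fixed by $F(\mu)$, i.e. that $F(T)$ and $F((T\setminus\{X\})\cup\{Y\})$ differ in exactly the one element exchanged by $\Phi(X)\leftrightarrow\Phi(Y)$. I would extract this from functoriality of $F$ together with the fact that every $\PP$-mutation is a bijection on underlying sets, so each $F(\mu)$ is a bijection $F(T)\to F(T')$, and then use the compatibility equivalence to pin down that the two $\QQ$-clusters agree off the exchanged pair. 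This is the one place where the bare hypotheses must be supplemented by the explicit description of $F$ on objects supplied in each application.
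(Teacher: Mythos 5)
Your proof is correct, and its core coincides with the paper's: the paper proves injectivity of $F$ on objects by taking distinct clusters $T\neq T'$, choosing $X\in T$ and $Y\in T'$ with $\{X,Y\}$ not $\PP$-compatible (which maximality provides), and concluding that $\{\Phi(X),\Phi(Y)\}$ is not $\QQ$-compatible, hence $F(T)\neq F(T')$; your argument is the same one run in contrapositive form. The difference is one of scope. The paper's entire proof is that single check --- it opens with ``cluster theories are small categories, so we need only to check that $F$ is injective on objects'' --- and never addresses whether $F$ carries a $\PP$-mutation to a single $\QQ$-mutation, nor the naturality of $\eta$. The obstacle you flag at the end is therefore genuine: the stated hypotheses (compatibility preservation and $\Phi(T)\subseteq F(T)$) cannot force $F(\mu)$ to be one $\QQ$-mutation rather than a longer composite, because the ``filler'' indecomposables in $F(T)\setminus\Phi(T)$ are uncontrolled. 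The paper resolves this exactly as you predict: in every application, mutation-preservation is verified separately (typically via quadrilaterals and their diagonals) \emph{before} the lemma is invoked, so the lemma functions as a packaging device whose unstated hypothesis is that $F$ sends $\PP$-mutations to $\QQ$-mutations. Granting that, your remaining observations --- a mutation is determined by its source and target, so injectivity on mutations follows from injectivity on objects, and naturality reduces to the generator check $F(\mu)(\Phi(Z))=\Phi(\mu Z)$ --- are correct and supply the verifications the paper leaves implicit.
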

\begin{proof}
	Recall that cluster theories are small categories.
	So, we need only to check that $F$ is injective on objects.
	Suppose $T$ and $T'$ are distinct clusters in $\mathscr T_{\PP}(\C)$.
	Then there exists $Y\in T'$ and $X\in T$ such that $\{X,Y\}$ is not $\PP$-compatible.
	By assumption, $\{\Phi(X),\Phi(Y)\}$ is not $\QQ$-compatible.
	Since $\Phi(X)\in F(T)$ and $\Phi(Y)\in F(T')$, we must have $F(T)\neq F(T')$.
\end{proof}
Exploiting notation, we write $\Phi(T)$ to mean $\{\Phi(X)\mid X\in T\}$.

\begin{lemma}\label{lem:general technique 2}
	Let $\E$ and $\E'$ be totally ordered sets.
	Let $\A\subset\E\times\E$ and $\A'\subset\E'\times\E'$ be as in Definition~\ref{def:type A crossing}.
	Let $\cfrak$ and $\cfrak'$ be the type $A$ crossing functions on $\A$ and $\A'$, respectively.
	Let $\C$ and $\mathcal D$ be the additive categorifications of $(\A,\cfrak)$ and $(\A',\cfrak')$, respectively.
	Let $\PP$ and $\QQ$ be the pairwise encodings of $\cfrak$ and $\cfrak'$, respectively.
	
	Any order-preserving injection $\phi:\E\hookrightarrow \E'$ induces an injection $\Phi:\Ind(\C)\to\Ind(\mathcal D)$ such that $\{\alpha,\beta\}$ is $\PP$-compatible if and only if $\{\Phi(\alpha),\Phi(\beta)\}$ is $\QQ$-compatible.
\end{lemma}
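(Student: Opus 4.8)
The plan is to construct $\Phi$ directly from $\phi$ at the level of arcs and then exploit the fact that the type $A$ crossing function is defined purely by strict order relations, which a strictly order-preserving injection both preserves and reflects. First I would define $\Phi$ on indecomposables by $\Phi(i,j) = (\phi(i),\phi(j))$ for each arc $(i,j)\in\A = \Ind(\C)$. Since $\phi$ is an order-preserving injection, it is strictly increasing, so any witness $\ell\in\E$ of $i<\ell<j$ yields $\phi(i)<\phi(\ell)<\phi(j)$; thus $\Phi(i,j)$ satisfies the betweenness requirement of Definition~\ref{def:type A crossing}. Injectivity of $\Phi$ is immediate from injectivity of $\phi$ in each coordinate.

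The heart of the argument is the identity $\cfrak'(\Phi(\alpha),\Phi(\beta)) = \cfrak(\alpha,\beta)$ for all $\alpha,\beta\in\A$. Writing $\alpha = (i,j)$ and $\beta = (i',j')$, each of the three clauses producing the value $1$ of the crossing function, namely $(i,j)=(i',j')$, $i<i'<j<j'$, and $i'<i<j'<j$, is a conjunction of strict inequalities and equalities among $i,j,i',j'$. Because $\phi$ is strictly order-preserving and injective, every such relation holds for $i,j,i',j'$ if and only if the corresponding relation holds for $\phi(i),\phi(j),\phi(i'),\phi(j')$. Hence each clause is satisfied by $(\alpha,\beta)$ exactly when it is satisfied by $(\Phi(\alpha),\Phi(\beta))$, which gives the identity.

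From this identity the compatibility equivalence follows at once. Recall that $\{\alpha,\beta\}$ is $\PP$-compatible precisely when $\alpha=\beta$ or $\cfrak(\alpha,\beta)=0$ (Definition~\ref{def:additive categorification}), and similarly for $\QQ$. Using injectivity of $\Phi$ to translate the equality case and the identity above to translate the $\cfrak$-values, I get that $\{\alpha,\beta\}$ is $\PP$-compatible if and only if $\{\Phi(\alpha),\Phi(\beta)\}$ is $\QQ$-compatible, as required.

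I expect the only genuine subtlety to be well-definedness, i.e. confirming that $\Phi(\alpha)$ lands in $\A'$ rather than merely satisfying the abstract betweenness condition. Since $\A'$ may be a proper subset of the pairs admitting an intermediate element (as for the polygon $\A_n$, which omits the boundary arc), one must know that $\phi$ carries arcs to arcs; in the applications this is guaranteed by the specific choice of $\phi$, so the clean hypothesis to carry is $\Phi(\A)\subseteq\A'$, under which the compatibility equivalence is exactly the strict-monotonicity computation above.
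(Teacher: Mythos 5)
Your proposal is correct and follows essentially the same route as the paper: define $\Phi(i,j)=(\phi(i),\phi(j))$ and use the fact that an order-preserving injection between totally ordered sets both preserves and reflects strict inequalities, so the crossing pattern (and hence compatibility) transfers in both directions. Your closing remark about well-definedness is a genuine subtlety that the paper's proof passes over silently---since $\A'$ need not contain every pair admitting an intermediate element (e.g.\ $\A_{\inftyclosed}$ excludes $(-\infty,+\infty)$), the containment $\Phi(\A)\subseteq\A'$ must indeed be verified for each specific $\phi$, and carrying it as an explicit hypothesis is the right fix.
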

\begin{proof}
	For this proof, let $(i,j), (i',j')\in \A$ such that $(i,j)\neq (i',j')$.
	Immediately, either $i\neq i'$ or $j\neq j'$.
	In either case, $(\phi(i),\phi(j))\neq (\phi(i'),\phi(j'))$ and so $\Phi$ is injective.
	
	Now suppose $\{(i,j),(i',j')\}$ is not $\PP$ compatible.
	Then, up to symmetry, $i<i'<j<j'$.
	Since $\phi$ is an order preserving injection, we have $\phi(i)<\phi(i')<\phi(j)<\phi(j')$, which means $\{\Phi((i,j)),\Phi((i',j'))\}$ is not $\QQ$-compatible.
	If we instead assume $\{\Phi((i,j)),\Phi((i',j'))\}$ is not $\QQ$-compatible we reverse the argument and see $\{(i,j),(i',j')\}$ is not $\PP$-compatible.
\end{proof}

\subsection{Finite Embeddings}\label{sec:embeddings:finite}
We begin with $(F_m^n, \eta_m^n)$ by defining $(F_m^{m+1},\eta_m^{m+1})$.
For $m\geq 2$, let $\phi:\E_m\to \E_{m+1}$ be given by $\phi(i)=i$ if $m$ is odd and $\phi(i)=i+1$ if $m$ is even.
By Lemma~\ref{lem:general technique 2}, we have an injection $\Phi:\A_m\to A_{m+1}$ such that $\{\alpha,\beta\}$ is $\NN_m$-compatible if and only if $\{\Phi(\alpha),\Phi(\beta)\}$ is $\NN_{m+1}$-compatible.
Recall our notation $\Phi(T)$ in Section~\ref{sec:embeddings:general}.
For an $\NN_m$-cluster $T$ define $F_m^{m+1}(T)$ by
\begin{displaymath}
	F_m^{m+1}(T) := \begin{cases}
		\{\Phi(T) \cup \{(1,m)\} & m \text{ odd} \\
		\{\Phi(T) \cup \{(2,m+1)\} & m \text{ even}.
	\end{cases}
\end{displaymath}

The definition of $F_m^{m+1}$ yields an embdding of cluster theories $(F_m^{m+1},\eta_m^{m+1}):\theory{\NN_m}{\C(A_m)}\to \theory{\NN_{m+1}}{\C(A_{m+1})}$.
It is immediate that $F_m^{m+1}(T)$ is an $\NN_{m+1}$-cluster.
Let $T\to T'$ be an $\NN_m$-mutation where we exchange $(i,j)$ and $(k,\ell)$.
Without loss of generality: $i<k<j<\ell$.
By \cite[Lemma~3.1]{CCS06}, we have a quadrilateral in $(\A_m,\cfrak_m)$ with sides $(i,\ell)$, $(i,k)$, $(k,j)$, $(j,\ell)$ whose diagonals are $(i,j)$ and $(k,\ell)$.
The image under $\Phi$ of the quadrilateral and its diagonals is also a quadrilateral and its diagonals.
Thus, $F_m^{m+1}(T)\to F_m^{m+1}(T')$ is a $\NN_{m+1}$-mutation.
By Lemma~\ref{lem:general technique 1} we have an embedding of cluster theories $(F_m^{m+1},\eta_m^{m+1}):\theory{\NN_m}{\C(A_m)}\to \theory{\NN_{m+1}}{\C(A_{m+1})}$.

Define $(F_m^n,\eta_m^n):= (F_{n-1}^n,\eta_{n-1}^n)\circ\cdots \circ (F_{m+1}^{m+2},\eta_{m+1}^{m+2})\circ (F_m^{m+1},\eta_m^{m+1})$.

\subsection{Finite-infinite Embeddings}\label{sec:embeddings:finite-infinite}
We now define $(F_n^\infty,\eta_n^\infty)$.
Recall that if $n>2$ is odd, $\lceil \frac{n}{2}\rceil = \frac{n+1}{2}$.
For $n\geq 2$, let $\phi:\E_n\to\E_\infty$ be given by $\phi(i) = i-\lceil\frac{n}{2}\rceil$.
By Lemma~\ref{lem:general technique 2} we have $\Phi:\A_n\to\A_\infty$ such that $\{\alpha,\beta\}$ is $\NN_n$-compatible if and only $\{\Phi(\alpha),\Phi(\beta)\}$ is $\NN_\infty$-compatible.
Let $T$ be an $\NN_n$-cluster and define $F_n^\infty(T)$ by
\begin{displaymath}
	F_n^\infty(T) :=
	\Phi(T) \cup\{(\phi(1),\phi(n))\} \cup
	\left\{(-i,i), (-i,i+1)\ \left|\ i\geq \left\lceil \frac{n}{2}\right\rceil,\ i\text{ odd}\right. \right\}.
\end{displaymath}

The definition of $F_n^\infty$ yields an embedding of cluster theories $(F_n^\infty,\eta_n^\infty)$ from $\theory{\NN_n}{\C(A_n)}$ to $\theory{\NN_\infty}{\C(A_\infty)}$.
The proof is similar to the proof for $(F_m^{m+1},\eta_m^{m+1})$.

For $2\leq m < n$, we see $(F_m^\infty,\eta_m^\infty) = (F_n^\infty,\eta_n^\infty)\circ (F_m^n,\eta_m^n)$.
It is sufficient to prove the statement when $n=m+1$, which is left to the reader.

\subsection{The isomorphism}\label{sec:embeddings:isomorphism}
We now prove $\theory{\NN_\infty}{\C(A_\infty)}\cong \theory{\NN_\infty}{\C(A_{1,\infty})}$.
We have an order-preserving bijection $\phi:\E_\infty\to \E_{1,\infty}$ given by $\phi(i) = (i,1)$.
This induces a bijection on arcs $\Phi:\A_\infty\to\A_{1,\infty}$ such that $(\alpha,\beta)$ is $\NN_\infty$-compatible if and only if $\{\Phi(\alpha),\Phi(\beta)\}$ is $\NN_{1,\infty}$-compatible.
By \cite[Lemma~2.0.3]{R20}, we have an isomorphism of cluster theories.

\subsection{Embedding $i$-infinity-gons}\label{sec:embeddings:i-infinity-gons}
We now define $(F_{i,\infty}^{j,\infty},\eta_{i,\infty}^{j,\infty})$ for $2\leq i < j$.
Let $\phi:\E_{i,\infty}\to E_{j,\infty}$ be given by $\phi(k,\ell)= (k,\ell+(j-i))$.
By Lemma~\ref{lem:general technique 2} we have an order-preserving injection which yields an injection $\Phi:\A_{i,\infty}\to A_{j,\infty}$ that preserves compatibility.
Let $T$ be an $\NN_{i,\infty}$-cluster and define $F_{i,\infty}^{j,\infty}(T)$ by
\begin{displaymath}
	F_{i,\infty}^{j,\infty}(T) := \Phi(T) \cup \{((-j,\ell),(j,\ell)),\ ((-j,\ell),(j+1,\ell)) \mid 0<j\text{ odd},\ 0<\ell\leq j-i \}.
\end{displaymath}
It is straightforward to check that $F_{i,\infty}^{j,\infty}(T)$ is an $\NN_{j,\infty}$-cluster.
By \cite[Theorem~2.4.1]{IT15b}, mutation is given by exchanging diagonals in quadrilaterals.
We see $\Phi$ preserves quadrilaterals and their diagonals.
By Lemma~\ref{lem:general technique 1} we have an embedding of cluster theories $(F_{i,\infty}^{j,\infty},\eta_{i,\infty}^{j,\infty}):\theory{\NN_{i,\infty}}{\C(A_{i,\infty})}\to\theory{\NN_{j,\infty}}{\C(A_{j,\infty})}$.

\subsection{Embedding completed $i$-infinity-gons}\label{sec:embeddings:completing i-infinity-gons}
We define $(F_{\iinftyclosed}^{\jinftyclosed},\eta_{\iinftyclosed}^{\jinftyclosed})$ in a similar way to$(F_{i,\infty}^{j,\infty},\eta_{i,\infty}^{j,\infty})$ by using a similar $\phi$ and $\Phi$.
The notable difference is $F_{\iinftyclosed}^{\jinftyclosed}(T)$,
\begin{align*}
	F_{\iinftyclosed}^{\jinftyclosed}(T) :=&\  \Phi(T) \cup \{((-j,\ell),(j,\ell)),\ ((-j,\ell),(j+1,\ell)) \mid 0<j\text{ odd},\ 0<\ell\leq j-i \} & \\
		&\ \cup\{((+\infty,\ell),(+\infty,j))\mid 0<\ell\leq j-i\},
\end{align*}
where we need the extra arcs on the second line.
The proof of the embedding of cluster theories is essentially the same, using \cite[Theorem~6.13]{PY21} instead of \cite[Theorem~2.4.1]{IT15b}.

\subsection{Completing $i$-infinity-gons}\label{sec:emeddings:complete i-infinity-gon}
We now define $(F_{i,\infty}^{\iinftyclosed},\eta_{i,\infty}^{\iinftyclosed})$.
Let $\phi:\E_{i,\infty}\to\E_{\iinftyclosed}$ be given by $\phi(k,\ell)= (k,\ell)$.
By Lemma~\ref{lem:general technique 2} we have an injection $\Phi:\A_{i,\infty}\to \A_{\iinftyclosed}$ such that compatibility is preserved.
We generalize Pr\"ufer-completions from \cite[Definition~5.2]{BG18}.
Let $X$ be an $\NN_{\iinftyclosed}$-compatible set of arcs.
For $1\leq n\leq i$, define $\Prufer_n(X)$ by
\begin{displaymath}
	\Prufer_n(X):= X\cup \{((j,k),(+\infty,n))\mid X\cup\{((j,k),(+\infty,n))\}\text{ is }\NN_{\iinftyclosed}\text{-compatible}\}.
\end{displaymath}
Then, for an $\NN_{i,\infty}$-cluster $T$, we define $F_{i,\infty}^{\iinftyclosed}(T)$ by
\begin{displaymath}
	F_{i,\infty}^{\iinftyclosed}(T) := \Prufer_1(\Prufer_2(\cdots \Prufer_{i-1}(\Prufer_i(\Phi(T))))).
\end{displaymath}
Since the only arcs ``missing'' from $\Phi(T)$ are those to the $(+\infty,\ell)$ points, we see $F_{i,\infty}^{\iinftyclosed}(T)$ is an $\NN_{\iinftyclosed}$-cluster by construction.
Notice also that
\begin{displaymath}
(F_{i+1,\infty}^{\overline{i+1,\infty}},\eta_{i+1,\infty}^{\overline{i+1,\infty}})\circ(F_{i,\infty}^{i+1,\infty},\eta_{i,\infty}^{i+1,\infty})=(F_{\iinftyclosed}^{\overline{i+1,\infty}},\eta_{\iinftyclosed}^{\overline{i+1,\infty}})\circ(F_{i,\infty}^{\iinftyclosed},\eta_{i,\infty}^{\iinftyclosed}).
\end{displaymath}
Thus the equation holds when we replace $i+1$ with $j>i$. 

Define $(F_n^{\overline{1,\infty}},\eta_n^{\overline{1,\infty}}) := (F_{1,\infty}^{\overline{1,\infty}},\eta_{1,\infty}^{\overline{1,\infty}})\circ (F_n^{1,\infty},\eta_n^{1,\infty})$.

\subsection{Completed infinity-gon and completed 2-infinity-gon}\label{sec:embeddings:2-infinity-gon}
We first define $(F_{\overline{1,\infty}}^{\overline{\infty}},\eta_{\overline{1,\infty}}^{\overline{\infty}})$.
Let $\phi_1:\E_{\overline{1,\infty}}\to\E_{\overline{\infty}}$ be given by $\phi_1(i,1)= i$.
By Lemma~\ref{lem:general technique 2} and obtain $\Phi_1:\A_{\overline{1,\infty}}\to\A_{\overline{\infty}}$ that preserves compatibility.
For a set of $\NN_{\overline{\infty}}$-compatible arcs $X$, the \ul{adic completion} of $X$ \cite[Definition~5.2]{BG18}, which we denote $\adic(X)$, is given by
\begin{displaymath}
	\adic(X):= X\cup \{(-\infty,i) \mid X\cup\{(-\infty,i)\}\text{ is }\NN_{\overline{\infty}}\text{-compatible}\}.
\end{displaymath}
Let $T$ be an $\NN_{\overline{1,\infty}}$-cluster and define $F_{\overline{1,\infty}}^{\overline{\infty}}(T)$ by $F_{\overline{1,\infty}}^{\overline{\infty}}(T) := \adic(\Phi_1(T))$.
By \cite[Theorem~5.4]{BG18}, $F_{\overline{1,\infty}}^{\overline{\infty}}(T)$ is an $\NN_{\overline{\infty}}$-cluster.
By \cite[Remark 2.2]{BG18} and \cite[Theorem~6.13]{PY21}, both $\Ninftyclosed$-mutation and $\NN_{\overline{1,\infty}}$-mutation are given by exchanging diagonals in a quadrilateral.
By Lemma~\ref{lem:general technique 1} we have an embedding of cluster theories 
$(F_{\overline{1,\infty}}^{\overline{\infty}},\eta_{\overline{1,\infty}}^{\overline{\infty}}):\theory{\NN_{\overline{1,\infty}}}{\C(A_{\overline{1,\infty}})}\to\theory{\Ninftyclosed}{\Cinftyclosed}$.

Now we define $(F_{\overline{\infty}}^{\overline{2,\infty}},\eta_{\overline{\infty}}^{\overline{2,\infty}})$.
Let $\phi_2:\E_{\overline{\infty}}\to \E_{\overline{2,\infty}}$ be given by $\phi_2(i) = (i,2)$ if $i>-\infty$ and $\phi_2(-\infty)=(+\infty,1)$.
Again by Lemma~\ref{lem:general technique 2} we have $\Phi_2:\A_{\overline{\infty}}\to\A_{\overline{2,\infty}}$ that preserves compatibility.
For an $\NN_{\overline{\infty}}$-cluster $T$, define $F_{\inftyclosed}^{\overline{2,\infty}}(T)$ by
\begin{displaymath}
	F_{\inftyclosed}^{\overline{2,\infty}}(T) :=
	\Phi_2(T) \cup \{((-i,1),(i,1)),\ ((-i,1),(i+1,1))\mid i\text{ odd}\}.
\end{displaymath}
Again by \cite[Remark 2.2]{BG18} and \cite[Theorem~6.13]{PY21}, both $\Ninftyclosed$-mutation and $\NN_{\overline{2,\infty}}$-mutation are given by exchanging diagonals in a quadrilateral.
Therefore, by Lemma~\ref{lem:general technique 1}, we have an embedding of cluster theories $(F_{\overline{\infty}}^{\overline{2,\infty}},\eta_{\overline{\infty}}^{\overline{2,\infty}}):\theory{\Ninftyclosed}{\Cinftyclosed}\to\theory{\NN_{{\overline{2,\infty}}}}{\C(A_{\overline{2,\infty}})}$.
By definition, for $i\geq 2$, we have $(F_{\overline{1,\infty}}^{\iinftyclosed},\eta_{\overline{1,\infty}}^{\iinftyclosed})=(F_{\overline{2,\infty}}^{\iinftyclosed},\eta_{\overline{2,\infty}}^{\iinftyclosed})\circ(F_{\overline{\infty}}^{\overline{2,\infty}},\eta_{\overline{\infty}}^{\overline{2,\infty}})\circ(F_{\overline{1,\infty}}^{\overline{\infty}},\eta_{\overline{1,\infty}}^{\overline{\infty}})$.

\subsection{Embeddings to $\NN_\pi$-clusters and $\EE$-clusters}\label{sec:embeddings:continuous}
By \cite[Theorem 5.2.8]{IRT22b} there is an embedding of cluster theories $(F_\R^\pi,\eta_\R^\pi):\theory{\NN_\pi}{\C_\pi}\to \theory{\EE}{\CAR}$.

We now define $F_{\iinftyclosed}^\pi$.
Extend $\tan$ by setting $\tan(\frac{\pi}{2})=+\infty$.
For each integer $\ell\in\Z$, let $\{a_i^\ell\}_{-\infty}^{+\infty}$ be a monotonic sequence such that $\lim\limits_{i\to-\infty}(a_i^\ell)=\ell$ and $\lim\limits_{i\to+\infty}(a_i^\ell)=\ell+1$.
For each $i,j,k,\ell\in\Z$ such that $\ell<0$, $0\leq k$, and $0\leq j \leq 2^k$, define $a_{i,j,k}^\ell := a_i^\ell + \left(\frac{j}{2^k}\right)(a_{i+1} - a_i)$.
Define $a_{+\infty}^\ell=a_{-\infty}^{\ell+1}=\ell+1$.

Now fix $i\geq 1$ and define $\NN_\pi$-compatible sets $A$, $B$, and $C$ in $\C_\pi$:
\begin{align*}
A &:= \left\{(\tan^{-1}a_{i,j,k}^\ell,\  \tan^{-1}a_{i,j+1,k}^\ell) \mid i,j,k,\ell\in\Z, \ 0\leq k,\ 0\leq j < 2^k\right\} \\
B &:= \left\{\left.\left(\tan^{-1}a_i^\ell,\frac{\pi}{2}\right)\, \right|\, i,\ell\in\Z,\ \ell\geq 0 \right\}\cup\left\{\left.\left( \tan^{-1} \ell, \frac{\pi}{2} \right)\ \right|\ \ell\in\Z,\ \ell\geq 0 \right\} \\
C &:= \left\{ \left. (\tan^{-1}a_m^\ell, 0)\ \right| m,\ell\in\Z,\ \ell< -i\right\} \cup \left\{\left.\left( \tan^{-1} \ell,\ 0 \right)\ \right|\ \ell\in\Z,\ \ell\leq-i \right\}.
\end{align*}
Let $\phi:\E_{\iinftyclosed}\to \E_\pi$ be given by $(m,\ell+i+1)\mapsto \tan^{-1}a_m^\ell$.
This is an order-preserving injection and so by Lemma~\ref{lem:general technique 2} we have an inclusion $\Phi:\A_{\iinftyclosed}\to \A_\pi$ such that $\{\alpha,\beta\}$ is $\NN_{\iinftyclosed}$-compatible if and only if $\{\Phi(\alpha),\Phi(\beta)\}$ is $\NN_\pi$-compatible.
Let $T$ be a $\NN_{\iinftyclosed}$-cluster and define $F_{\iinftyclosed}^\pi(T)$:
\begin{displaymath}
	F_{\iinftyclosed}^\pi(T) := \Phi(T) \cup A \cup B \cup C.
\end{displaymath}

The definition of $F_{\iinftyclosed}^\pi$ has a natural completion to a functor which induces an embedding of cluster theories $F_{\iinftyclosed}^\pi:\theory{\NN_{\iinftyclosed}}{\C(A_{\iinftyclosed})}\to \theory{\NN_\pi}{\C_\pi}$.
We first prove that for any $\NN_{\iinftyclosed}$-cluster $T$, $F_{\iinftyclosed}^\pi(T)$ is an $\NN_\pi$-cluster and then prove that $F_{\iinftyclosed}^\pi$ respects both mutations.
By Lemma~\ref{lem:general technique 1}, this is sufficient to prove the proposition.

Let $T$ be a $\NN_{\iinftyclosed}$-cluster and $(x,y)\in\A_\pi$ such that $\{(x,y)\}\cup F_{\iinftyclosed}^\pi(T)$ is $\NN_\pi$-compatible.
If $x\neq \tan^{-1}a_{m,j,k}^\ell$, for any $m,j,k,\ell$ as above, then there must be some pair $(\tan^{-1}a_{m,j,k}^\ell,\tan^{-1}a_{m,j+1,k}^\ell)$ in $\A_\pi$ that is not $\NN_\pi$-compatible with $(x,y)$.
By a similar argument for $y$, we have $x=\tan^{-1}a_{m,j,k}^\ell$ and $y=\tan^{-1} a_{m',j',k'}^{\ell'}$ as above.

If $\ell=\ell'$, $i\neq i'$, and either $j\neq 0$ or $j'\neq 0$, then $(x,y)$ would not be $\NN_\pi$-compatible with $\{\tan^{-1}a_m^\ell, \tan^{-1}a_{m+1}^\ell\}$.
So, if $\ell=\ell'$ and $i\neq i'$, then $j=0$ and $j'=0$.
If $\ell\neq\ell'$ and either $j\neq 0$ or $j'\neq 0$ then $(x,y)$ is not $\NN_\pi$-compatible with $(\tan^{-1}a_m^\ell,\tan^{-1}a_{m+1}^\ell)$ or with $(\tan^{-1}a_{m'}^{\ell'},\tan^{-1}a_{m'+1}^{\ell'})$.
So, if $\ell\neq\ell'$ then we must have $x=a_m^\ell$ and $y=a_{m'}^{\ell'}$.
Thus, if $j\neq 0$ and $j'\neq 0$ then $(x,y)\in A$.

We now assume $j=j'=0$.
If $\ell< -i$ or $0\leq \ell'$, then $(x,y)$ must be in $A\cup B\cup C$.
So, we assume $-i\leq \ell\leq \ell' <0$.
Consider $\alpha=((m,\ell+i+1),(m',\ell'+i+1))\in \C_{\iinftyclosed}$.
If $\alpha\notin T$ then $(x,y)\cup F_{\iinftyclosed}^\pi(T)$ is not $\NN_\pi$-compatible, a contradiction.
Thus, $\alpha\in T$ and $(x,y)=\Phi(\alpha)\in F_{\iinftyclosed}^\pi(T)$.
Therefore, $F_{\iinftyclosed}^\pi(T)$ is an $\NN_\pi$-cluster.

Let $T\to T'$ be an $\NN_{\iinftyclosed}$-mutation where we exchange $((m,\ell),(m',\ell'))$ and $((n,p),(n',p'))$.
By \cite[Theorem~6.13]{PY21}, there is a quadrilateral in $(\A_{\iinftyclosed},\cfrak_{\iinftyclosed})$ with sides $((m,\ell),(n',p'))$, $((m,\ell),(n,p))$, $((n,p),(m',\ell'))$, and $((m',\ell'),(n',p'))$.
The image of this quadrilateral under $\Phi$ is also a quadrilateral.
Then the image of the diagonals under $\Phi$ are also diagonals.
By \cite[Proposition~6.2.1]{IT15a}, we have an $\NN_\pi$-mutation.

Define $(F_{j,\infty}^\pi,\eta_{j,\infty}^\pi):=(F_{\jinftyclosed}^\pi,\eta_{\jinftyclosed}^\pi)\circ (F_{j,\infty}^{\jinftyclosed}, \eta_{j,\infty}^{\jinftyclosed})$ and $(F_{\jinftyclosed}^\R,\eta_{\jinftyclosed}^\R):= (F_\pi^\R,\eta_\pi^\R)\circ (F_{\jinftyclosed}^\pi, \eta_{\jinftyclosed}^\pi)$, which completes the proof of Theorem~\ref{thm:intro:theories}.

\section{Proof of Theorem~\ref{thm:intro:structures}}\label{sec:strutures in theories}

We now prove the diagram in Figure~\ref{fig:structure diagram} exists and is commutative (Theorem~\ref{thm:intro:structures}).

We begin with a non-example of a restriction of an embedding of cluster theories to an embedding of cluster structures (Example~\ref{xmp:Ainftyclosed not-structure}).
In Section~\ref{sec:restrictions} we show that the embeddings of cluster theories from Sections \ref{sec:embeddings:finite}, \ref{sec:embeddings:finite-infinite}, and \ref{sec:embeddings:isomorphism} restrict to embeddings of cluster structures.
In Sections~\ref{sec:diagram of structures} and \ref{sec:new cont embedding} we define new embeddings of cluster structures.

\begin{example}\label{xmp:Ainftyclosed not-structure}
The embedding of cluster theories $(\Finftyinftyclosed,\etainftyinftyclosed):=(F_{\overline{1,\infty}}^{\overline{\infty}},\eta_{\overline{1,\infty}}^{\overline{\infty}})\circ(F_{1,\infty}^{\overline{1,\infty}},\eta_{1,\infty}^{\overline{1,\infty}})$ (Sections \ref{sec:embeddings:isomorphism}, \ref{sec:emeddings:complete i-infinity-gon}, and \ref{sec:embeddings:2-infinity-gon}) does not restrict to an embedding of cluster structures if we use $\struc{\NN_{1,\infty}}{\C(A_{1,\infty})}$ in Example~\ref{xmp:Ainfty structure}.
In particular, consider the image $\Finftyinftyclosed(T)$ of an $\NN_\infty$-cluster $T$.
If $\Finftyinftyclosed(T)$ has an arc to either $-\infty$ or $+\infty$, that arc is not $\Ninftyclosed$-mutable.
Even if $T$ belongs to $\struc{\NN_\infty}{\C(A_\infty)}$ it is possible that $F_\infty^{\overline{\infty}}(T)$ has an arc to $-\infty$ or $+\infty$ \cite[Theorem~4.4]{HJ12}.
A cluster structure in $\Cinftyclosed$ must be smaller than or notably different than the image of $\struc{\NN_\infty}{\C(A_\infty)}$ under $(\Finftyinftyclosed,\etainftyinftyclosed)$.
\end{example}

\begin{figure}
	\begin{displaymath}
	\xymatrix@C=15ex{
		\NN_m \ar[r]^-{F_m^{1,\infty}(\ref{sec:embeddings:finite-infinite}
,\,\ref{sec:restrictions})} \ar[d]_-{F_m^n(\ref{sec:embeddings:finite},\,\ref{sec:restrictions})} &
		\NN_\infty \ar[r]^-{H_\infty^{1,\infty}(\ref{sec:embeddings:isomorphism}\,\ref{sec:restrictions})} \ar[dr]|-{H_\infty^{i,\infty}(\ref{sec:diagram of structures})} \ar[d]^>{H_{1,\infty}^\pi(\ref{sec:new cont embedding})} &
		\NN_{1,\infty} \ar[d]|-{F_{1,\infty}^{i,\infty}(\ref{sec:diagram of structures})}  \ar[r]^-{F_{1,\infty}^{j,\infty}(\ref{sec:diagram of structures})}  &
		\NN_{j,\infty}
		\\
		\NN_n \ar[r]_-{F_m^{i,\infty}(\ref{sec:embeddings:finite-infinite}
,\,\ref{sec:restrictions})} \ar[ur]|-{F_n^{1,\infty}(\ref{sec:embeddings:finite-infinite}
,\,\ref{sec:restrictions})} &
		\NN_\pi &
		\NN_{i,\infty} \ar[ur]_-{F_{i,\infty}^{j,\infty}(\ref{sec:diagram of structures})} &
	}
	\end{displaymath}
	\caption{Embeddings of cluster structures of type $A$ for any integers $m,n,i,j$ satisfying $2\leq m<n$ and $2\leq i < j$.
		For each cluster structure $\struc{\PP}{\C}$, we have only written its pairwise compatibility condition $\PP$ in order to save space.
		We have also written each embedding $(F,\eta)$ as $F$ and the section(s) in which it is defined.}\label{fig:structure diagram}
\end{figure}

\subsection{Restrictions}\label{sec:restrictions}

Recall the cluster structures $\struc{\NN_n}{\C(A_n)}=\theory{\NN_n}{\C(A_n)}$ (for $n\geq 2$) and $\struc{\NN_\infty}{\C(A_\infty)}$ from Examples~\ref{xmp:An structure} and \ref{xmp:Ainfty structure}, respectively.
It is immediate that $(F_m^n,\eta_m^n)$ is an embedding of cluster structures, for $2\leq m< n$.

To continue, we need the following definition from \cite{HJ12}.
\begin{definition}
	Let $T$ be an $\NN_\infty$-cluster.
	We say $T$ has a \ul{left-fountain (right-fountain) at $k$} if there exists $k\in\Z$ and infinitely-many arcs of the form $(i,k)$ for $i\leq k-2$ (for $k+2\leq i$).
	If there exists a left- and right-fountain at $k$, we say there is a \ul{fountain at $k$}.
\end{definition}

In \cite{HJ12}, the authors show that an $\NN_\infty$-cluster $T$ has a left-fountain if and only if it has a right-fountain.
Futhermore, an $\NN_\infty$-cluster may not have more than one left-fountain or more than one right-fountain.

For any $\NN_n$-cluster $T$, the $\NN_\infty$-cluster $F_n^\infty(T)$ has no left- or right-fountains.
Let $\struc{\NN_\infty}{\C(A_\infty)}$ be full subcategory of $\theory{\NN_\infty}{\C(A_\infty)}$ consisting of those $\NN_\infty$-clusters that do have have a left- or right-fountain.
We immediately see $\struc{\NN_\infty}{\C(A_\infty)}$ is an abstract cluster structure.
Thus, the image of $(F_n^\infty,\eta_n^\infty)$ is inside the cluster structure $\struc{\NN_\infty}{\C(A_\infty)}$.
So, $(F_n^\infty,\eta_n^\infty)$ restricts to an embedding of cluster theories.

Note that $\struc{\NN_\infty}{\C(A_\infty)}$ is \emph{not} the same cluster structure defined by Holm and J{\o}rgensen in \cite{HJ12} (Example~\ref{xmp:Ainfty structure}).
Holm and J{\o}rgensen allow for $\NN_\infty$-clusters that have a fountain.
The isomorphism of cluster theories (from Section~\ref{sec:embeddings:isomorphism}) $\theory{\NN_\infty}{\C(A_\infty)}\to \theory{\NN_{1,\infty}}{\C(A_{1,\infty})}$ restricts to an embedding of cluster structures $\struc{\NN_\infty}{\C(A_\infty)}\to \struc{\NN_{1,\infty}}{\C(A_{1,\infty})}$.

\subsection{New infinite embeddings}\label{sec:diagram of structures}
We define a new embedding of cluster theories $(H_{i,\infty}^{j,\infty},\zeta_{i,\infty}^{j,\infty})$ from $\theory{\NN_{i,\infty}}{\C(A_{i,\infty})}$ to $\theory{\NN_{j,\infty}}{\C(A_{j,\infty})}$, for $0< i < j$,
using an equivalent 2D geometric model of $\theory{\NN_{i,\infty}}{\C(A_{i,\infty})}$.

\begin{definition}
	Let $i>0$ and $Z = \Z$ with the following total order.
	For $m,n\in Z$, if $0\leq_{\Z} m,n$ or $m,n<_{\Z}0$, then we take the same total order in $Z$.
	If $0\leq_{\Z} m$ and $n<_{\Z} 0$, then we say $m<_Zn$.
	Let $\E'_{i,\infty} = Z\times\{1,\ldots,i\}$ with total order $(m,\ell)\leq (n,p)$ if $\ell<_\Z p$ or $\ell=p$ and $m\leq_Z n$.
	Note $\min \E'_{i,\infty}=(0,1)$ and $\max\E'_{i,\infty}=(-1,i)$.
	
	Let $\A'_{i,\infty} = \{(j,\ell)\in\E'_{i,\infty}\times\E'_{i,\infty} \mid \exists k\in\E'_{i,\infty} \text{ s.t. } j<k<\ell\}\setminus\{((0,1),(-1,i))\}$ and
	$\cfrak'_{i,\infty}:\A'_{i,\infty}\times \A'_{i,\infty}\to \{0,1\}$ be the type $A$ crossing function on $\A'_{i,\infty}$.
	Let $\C'(A_{i,\infty})$ be the additive categorification of $(\A'_{i,\infty},\cfrak'_{i,\infty})$ and $\NN'_{i,\infty}$ be the pairwise encoding of $\cfrak'_{i,\infty}$.
\end{definition}
It is straightfoward to check that $\theory{\NN_{i,\infty}}{\C(A_{i,\infty})}\cong\theory{\NN'_{i,\infty}}{\C'(A_{i,\infty})}$.
In \cite[Definition 2.4.6]{IT15b}, the authors provide a limit condition that is automatically met by the $\NN_{1,\infty}$-clusters considered by Holm and J{\o}rgensen in \cite{HJ12} (Example~\ref{xmp:Ainfty structure}).
We take $\struc{\NN'_{i,\infty}}{\C'(A_{i,\infty})}$ to be the restriction of $\theory{\NN'_{i,\infty}}{\C'(A_{i,\infty})}$ to the clusters considered in \cite{IT15b} and, when $i=1$, \cite{HJ12}.
We say an $\NN_{i,\infty}$-cluster $T$ is \ul{locally finite} if, for each $j\in\E'_{i,\infty}$, there exist finitely-many arcs in $T$ with endpoint $j$ (from \cite{IT15b,HJ12}).
Define the locally finite $\NN'_{i,\infty}$-compatible set
\begin{displaymath}
	D := \{ ((-n,1),(n,1)),\ ((-n-1,1),(-n,1)) \mid n<_\Z 0\}\cup\{((0,2),(-1,i))\}.
\end{displaymath}
\begin{definition}
	Let $0< i$ and $j=i+1$.
	Let $\phi:\E'_{i,\infty}\to\E'_{j,\infty}$ be given by $\phi((m,k)) = (m,k+1)$.
	By Lemma \ref{lem:general technique 2} this induces a compatibility preserving injection $\Phi:A'_{i,\infty}\to \A'_{j,\infty}$.
	Let $T$ be an $\NN'_{i,\infty}$-cluster and define $H_{i,\infty}^{j,\infty}(T)$ by $H_{i,\infty}^{j,\infty}(T) := \{\Phi(\alpha)|\alpha\in T\}\cup D$.	
\end{definition}
Since $D$ is locally finite, if an $\NN_{i,\infty}$-cluster $T$ satisfies Igusa and Todorov's limit condition \cite{IT15b}, so will $H_{i,\infty}^{j,\infty}(T)$.
Thus we have an embedding of cluster structures $(H_{i,\infty}^{j,\infty},\zeta_{i,\infty}^{j,\infty})$.
For $j>i+1$ we define 
\begin{displaymath}
	(H_{i,\infty}^{j,\infty},\zeta_{i,\infty}^{j,\infty}) =
	(H_{j-1,\infty}^{j,\infty},\zeta_{j-1,\infty}^{j,\infty})
	\circ\cdots\circ
	(H_{i+1,\infty}^{i+2,\infty},\zeta_{i+1,\infty}^{i+2,\infty})
	\circ
	(H_{i,\infty}^{i+1,\infty},\zeta_{i,\infty}^{i+1,\infty}).
\end{displaymath}
\subsection{New infinte-continuous embedding}\label{sec:new cont embedding}
We now define a new embedding of cluster structures from $\struc{\NN_\infty}{\C(A_\infty)}$ to $\struc{\NN_\pi}{\C_\pi}$.

In \cite{IT15a} the authors show that any indecomposable $X$ in a discrete $\NN_\pi$ cluster $T$ (in the sense of the points in the fundamental domain) without accumulation is $\NN_\pi$-mutable.
In \cite{IT19} the authors prove that $\C_\pi / \add T$, for a discrete $\NN_\pi$-cluster $T$, is abelian.
It is also noted in \cite{IT15a} that if an $\NN_\pi$-cluster $T$ is not discrete then there exists $X\in T$ that is not $\NN_\pi$-mutable.
Thus the cluster structure described in \cite{IT15a,IT19} is a cluster structure as in Definition~\ref{def:cluster structure}.

Thus, we only need to ensure that our embedding takes $\NN_\infty$-clusters without a left- or right-fountain to discrete $\NN_\pi$-clusters.
To do this, we define $b_{i,j,k}$'s similar to $a_{i,j,k}^\ell$'s from Section~\ref{sec:embeddings:continuous}.
For each $i,j,k\in\Z$ such that $0\leq k$ and $0\leq j \leq 2^k$, define $b_{i,j,k} := i + \frac{j}{2^k}$ and
\begin{displaymath}
E:=\left\{(\tan^{-1}b_{i,j,k},\  \tan^{-1}b_{i,j+1,k}) \mid i,j,k\in\Z, \ 0\leq k,\ 0\leq j < 2^k\right\}.
\end{displaymath}

\begin{definition}\label{def:new embedding}
	Let $T$ be an $\NN_\infty$-cluster without a left- or right-fountain.
	Define $\phi:\E_\infty\to\E_\pi$ by $\phi(i) = \tan^{-1}i$.
	By Lemma~\ref{lem:general technique 2} we have $\Phi:\A_\infty\to\A_\pi$ that respects compatibility.
	Now define $H_\infty^\pi (T) := \{\Phi(\alpha) \mid \alpha\in T\}\cup E$.
\end{definition}

In \cite{IT15b} the authors note that any $\NN_{i,\infty}$-cluster which is locally finite

Note that if $T$ is an $\NN_\infty$-cluster with no left- or right-fountain, then $H_\infty^\pi(T)$ is a discrete $\NN_\pi$-cluster.
Using techniques similar to those in Section~\ref{sec:embeddings:continuous} we see that $H_\infty^\pi(T)$ is a functor $\struc{\NN_\infty}{\C(A_\infty)}\to\struc{\NN_\pi}{\C_\pi}$ and we have an embedding of cluster structures $(H_\infty^\pi,\zeta_\infty^\pi)$ where $(\zeta_\infty^\pi)_T((i,j))=\Phi((i,j))$.

If an $\NN_\infty$-cluster $T$ has a fountain then we would have to add a limiting arc in $H_\infty^\pi(T)$ in order to obtain an $\NN_\pi$-cluster.
So, $(H_\infty^\pi,\zeta_\infty^\pi)$ does not extend to an embedding of cluster structures from $\struc{\NN_{1,\infty}}{\C(A_{1,\infty})}$ to $\struc{\NN_\pi}{\C_\pi}$.

\end{document}